\newtheorem{theorem}{Theorem}
\newtheorem{definition}{Definition}
\newtheorem{proposition}{Proposition}
\newtheorem{lemma}{Lemma}
\newtheorem{remark}{Remark}
\renewcommand{\le}{\leqslant}
\renewcommand{\ge}{\geqslant}
\renewcommand{\to}{\rightarrow}
\newcommand{\EE}{{\mathbb{E}}}
\def\be#1\ee{\begin{equation}#1\end{equation}}
\def \CvM{\mathrm{CvM}}
\def \Id{\mathbf{I}}
\def\be#1\ee{\begin{equation}#1\end{equation}}
\def\bqa{\begin{eqnarray}}
\def\eqa{\end{eqnarray}}
\newcommand{\bd}{\begin{displaymath}}
\newcommand{\ed}{\end{displaymath}}
\newcommand{\ba}{\begin{eqnarray}}
\newcommand{\ea}{\end{eqnarray}}
\def\bX{{\bf X}}
\def \CvM{\mathrm{CvM}}
\DeclareMathOperator{\diag}{diag}
\def \PP{\mathcal{P}}
\def \erre{\mathbb{R}}
\author{
  Gennaro Auricchio \\ University of Padova \\ Department of Mathematics \\ \texttt{gennaro.auricchio@unipd.it}
  \and
  Adelaide Emma Bernardelli \\ University of Salerno \\ Department of Economics and Statistics \\ IUSS Pavia \\ \texttt{adelaide.bernardelli@iusspavia.it}
  \and
  Paolo Giudici \\ University of Pavia \\  Department of Economics and Management\\\texttt{paolo.giudici@unipv.it}
  \and
  Giuseppe Toscani \\ University of Pavia \\  Department of Mathematics\\ \texttt{giuseppe.toscani@unipv.it}
}
\date{}
\begin{document}


%

\title{On Rank Graduation Metrics for High Dimensional Ordinal Data}








\maketitle


\begin{abstract}
    Evaluating the reliability of machine learning classifications  remains a fundamental challenge in Artificial Intelligence (AI), particularly when the target variable is  multidimensional.
Classification variables can be expressed by means of a categorical scale which, at best, is ordinal.
Because ordinal data lack a natural metric structure in their underlying space, most conventional distance measures aimed at assessing the accuracy of machine learning classifications cannot be directly or meaningfully applied. 
In this paper, we develop a mathematical framework for comparing ordinal data based on a family of Rank Graduation $(\mathrm{RGX}_p)$ \emph{metrics}. 
We demonstrate that these metrics can quantify the proportion of variability of the response explained by the predictions, in a similar manner as the predictive $R^2$ for continuous response variables.
After establishing theoretical connections between the $\mathrm{RGX}_p$ family and other prominent metrics in AI, we conduct extensive experiments across diverse datasets and learning tasks to evaluate their empirical performance. 
The results underscore the versatility, interpretability, and robustness of the $\mathrm{RGX}_p$ metrics as a principled foundation for developing trustworthy and SAFE AI systems.
\end{abstract}







\section{Introduction}

Making reliable predictive classifications is one of the fundamental objectives of Artificial Intelligence (AI), encompassing applications such as medical diagnosis, credit rating, educational evaluation, and customer satisfaction analysis.
All these tasks can be modelled as classification problems: given a collection of  input variables, expressed in different scales, and an output variable, expressed by a set of ordered categories (classes), a machine learning model learns the relationships between the inputs and the outputs and, consequently,  an artificial intelligence decides how to classify a given unit.

While the rapid advancement of AI has substantially improved performance in such domains, it has also raised concerns about reliability, robustness, and transparency. 
Misuse or misinterpretation of AI models can lead to unethical or even harmful outcomes,\cite{Bengio2025} which motivates the development of international frameworks and regulatory initiatives aimed at ensuring that AI is deployed in a trustworthy and responsible manner.\cite{EUAIAct2022,NIST2023AIRMF,OECD2022ClassificationAISystems}

These frameworks typically identify three core parameters for assessing the trustworthiness of an AI model: \emph{accuracy}, \emph{robustness}, and \emph{explainability}.
%
%

%
The \emph{accuracy} of a model reflects the reliability of its outputs.\cite{yin2019understanding} 
In classification tasks, accuracy is often expressed as the probability of correctly labelling a given input.
Since AI models typically produce probabilistic outputs, accuracy is commonly evaluated through discrepancy functions that measure the deviation between the model’s predictions and the correct labels.\cite{muthukumar2021classification}

The \emph{robustness} of a model characterises its resilience to external interference. 
AI systems are frequently deployed across cloud infrastructures, high-performance computing environments, and edge devices, all of which may present vulnerabilities that can be exploited to manipulate or erase training data.\cite{Aldasoro2022CyberRisk} 
For example, this happens in federated machine learning.\cite{bouacida2021vulnerabilities,yang2019federated}  
To mitigate such risks, it is crucial to develop methods for evaluating how susceptible a model’s predictions are to malicious alteration.\cite{song2025fedadmm}

The \emph{explainability} of a model is a key component of its transparency, and describes the ability to identify the main factors driving the model's predictions.
Indeed, being able to explain the output of an AI model directly addresses the issues of accountability and control of complex AI,\cite{Bengio2025,Bracke2019Explainability,Lundberg2017SHAP,Ribeiro2016WhyTrustYou,zuazua2025machine} an issue recognised as a cornerstone of trustworthy AI by major international governance frameworks.\cite{EUAIAct2022,NIST2023AIRMF,OECD2022ClassificationAISystems}
Moreover, defining explainable AI models promotes sustainability by encouraging simpler, more resource-efficient models,\cite{Babaei2022ExplainableFintech} enhances fairness,\cite{Agarwal2023BiasLending,Chen2024FairnessRatings,Mitchell2021AlgorithmicFairnessReview,Passach2022FairnessSurvey} and supports privacy preservation by analysing the effects of variable removal or noise perturbation on predictive performance.\cite{Li2020GCNPrivacy,LiuTerzi2010PrivacyScores}
Despite their conceptual clarity, these three descriptors are not always easy to evaluate within a unified mathematical framework. 
In particular, the data and representations involved in each aspect often differ in structure and meaning, making direct comparison difficult.
This challenge becomes even more pronounced when dealing with \emph{ordinal data}—data characterised by an intrinsic order but lacking a natural notion of numerical difference. 
Ordinal data are pervasive in real-world AI applications, and especially in those involving predictive classification:\cite{frank2001simple,gutierrez2015ordinal} physicians rate symptom severity, teachers assign performance levels, banks assigns ratings to borrowers, consumers express satisfaction using ordered categories, and agents express preferences over a set of items or locations.\cite{anshelevich2021ordinal,hanjoul1987facility}
The assignment of arbitrary numerical values to ordinal categories can introduce distortions that undermine both accuracy and interpretability, potentially yielding biased results and rendering the treatment of such data inherently delicate.
%
%
%
Indeed, ordinal data occupy a special position among non-Euclidean types as they encapsulate a more generic notion of preference.\cite{cheng2008neural,gutierrez2014ordinal,liu2018constrained}
In this paper, we address this issue by proposing a unified mathematical framework for comparing ordinal data for both the univariate and the multivariate case. 
The proposed approach provides a principled way to assess similarity or discrepancy between ordered observations without imposing arbitrary metric assumptions. 
This contributes not only to improving \emph{accuracy}, but also to enhancing \emph{robustness} and \emph{explainability}, as it allows one to reason transparently about the relationships between input features  and model outputs.

\subsection{Related Works}
The first dimension of AI model assessment is accuracy. 
In regression analysis, model accuracy is typically assessed using the Mean Squared Error (MSE) and its normalised counterpart, the predictive $R^2$.\cite{Gneiting2011} 
For classification tasks, evaluation commonly relies on metrics derived from the confusion matrix—such as Accuracy, F1 score, Area Under the Curve (AUC), and the Precision–Recall curve.\cite{HandTill2001MulticlassAUC} 
The Rank Graduation Accuracy Metric, namely $\mathrm{RGA}$, is a metric that builds on the concept of concordance, originally derived from the Lorenz curve and the Gini coefficient,\cite{Gini1921InequalityIncome,Lorenz1905Concentration} and extends the AUC framework to provide a single, coherent measure of predictive performance suitable for binary, ordinal, and continuous responses.\cite{GiudiciRaffinetti2024RGA}
For binary outcomes, $\mathrm{RGA}$ coincides with AUC; for ordinal and continuous responses, it serves as an analogous ranking-based measure of accuracy. 
For structured data, as images, there are several \textit{ad-hoc} introduced tools\cite{6467150} along with a plethora of divergences borrowed from kinetic theory,\cite{auricchio2025kinetic} which have found a natural home in artificial intelligence and other applied fields.\cite{bellomo2024life,bellomo2024herbert}
Aside from the standard Kullback-Leiber entropy,\cite{kullback1951information} we recall 
\begin{enumerate*}[label=(\roman*)]
    \item the Energy Distance, which has been extensively studied for statistical purposes\cite{rizzo2016energy,szekely2013energy} and recently has been rediscovered due to its appeal to study evolutionary problems\cite{auricchio2025energy} and its mathematical properties;\cite{bellemare2017cramer}
    \item the Wasserstein Distance,\cite{villani2008optimal} which has been thoroughly studied for its applicability to imaging problems,\cite{frogner2015learning,rabin2011wasserstein,tartavel2016wasserstein} to clustering,\cite{auricchio2019computing,ye2017fast,zhuang2022wasserstein} and to multiagent based systems\cite{auricchio2024extended,scagliotti2025normalizing} due to the recent developing of efficient computing methods;\cite{altschuler2019massively,auricchio2018computing,cuturi2013sinkhorn} and
    \item the Fourier Based Metrics,\cite{gabetta1995metrics} which compare the probability measures in the space of phases.\cite{auricchio2020equivalence,auricchio2023fourier,carrillo2007contractive}
\end{enumerate*} 

A second key dimension of AI model assessment concerns \emph{robustness}. 
The integration of AI into cloud infrastructures, high-performance computing environments, and edge devices introduces vulnerabilities that can be exploited to manipulate or delete training data.\cite{Aldasoro2022CyberRisk}
To explicitly account for robustness within risk management frameworks, the probability of misclassification due to a security breach can be represented as a corresponding loss in predictive accuracy (e.g., in $R^2$ or $AUC$ terms). 
%


%
The third indicator, addressing explainability, is perhaps the less conventional quantity to capture via a divergence.
A possible approach to measure how much one feature in the data affects the model performances is to compare the model trained with and without the selected feature.
In this way, we are able to express the explainability of a feature via a divergence.
This approach somewhat mimics the technique used to assess the robustness: we introduce a perturbation in the dataset (i.e., we remove a feature) and compare the performances of the model.
A different approach relies on Shapley values, a concept coming from cooperative game theory.\cite{mitchell2022sampling,rozemberczki2022shapley}
Through Shapley values, it is possible to quantify how much the feature contributes to the training of the model.
While mathematically sound, this approach is hindered by the computational cost of retrieving the Shapley values, thus such values are usually approximated via Monte Carlo methods.

The joint assessment of accuracy, robustness and explainability of classification models is difficult, and becomes particularly challenging to handle when the target response variable is multivariate. 
Part of the difficulty is due to the absence of a standard definition for the multivariate Lorenz curve; instead, several alternative formulations have been proposed,\cite{arnold2014pareto,koshevoy1996lorenz,taguchi1972two} including the concept of a Lorenz Zonoid introduced in Ref. \cite{koshevoy1997multivariate}. 
Although Lorenz Zonoids provide a mathematically and geometrically sound generalisation of the one-dimensional Gini coefficient, they are often difficult to implement, primarily due to computational complexity. 
Furthermore, they do not possess properties analogous to those of their unidimensional counterparts.\cite{decancq2012inequality} 
For example, the \emph{scaling invariance} property, which holds in the one-dimensional case, is typically not preserved by multidimensional Lorenz Zonoids.
An approach that allows to overcome the lack of the scale invariance property has been proposed in Ref. \cite{Auricchio2025WhiteningGini}, where the authors use a suitable whitening procedure to reduce the study to many uncorrelated one-dimensional distributions.
This approach has been proven to be ductile as it allows to define divergences that are scale invariant with respect to two or more arguments.\cite{Auricchio2025MultivariateGini}

\section{Preliminaries}
\label{sec2}

In this section, we introduce the main mathematical concepts we will be using throughout the paper and fix the main notation.
Throughout most of our discussion, we will deal with $N$ dimensional vectors $Y:=(Y_1,Y_2,\dots,Y_N)$ and denote with $F_n$ the empirical c.d.f. associated with the values in $\{Y_i\}_{i\in[n]}$.
By definition, the ranks of the observations, namely $r_i$, are such that $r_i=F_n(Y_i)$, thus
\[
    Y_{r_1}\le Y_{r_2}\le \dots \le Y_{r_n}
\]
are the order statistics.

\subsection{The Gini Index}

The Gini index, also known as the Gini coefficient, is a statistical measure of uneveness within a distribution, most commonly applied to quantify income or wealth inequality.\cite{Gini1914Concentrazione,Gini1921InequalityIncome}
Formally, for a population with cumulative distribution function $F(x)$ of income $x$, the Gini index $G$ can be expressed as
\begin{equation}
    \label{eq:lorentz_gini}
    G = 1 - 2 \int_{0}^{1} L(p) \, dp,    
\end{equation}
where $L(p)$ denotes the Lorenz curve, representing the cumulative share of income earned by the bottom $p$-fraction of the population, that is
\[
        L(p)=\frac{1}{m}\int_{-\infty}^{F^{[-1]}(p)}tdF(x),
\]
where $m$ is the average of $F$.
For a finite sample $Y=(Y_1, Y_2, \ldots, Y_N)$ the Lorentz curve is the unique piecewise linear function that interpolates the points
\[
    \Big(\frac{k}{N},\sum_{i=1}^kY_{r_i}\Big),
\]
where $r_i$ is the increasing reordering of $Y$, that is
\[
    Y_{r_1}\le Y_{r_2}\le Y_{r_3}\le \dots \le Y_{R_N}.
\]

Notice that the formula in \eqref{eq:lorentz_gini} can  be rewritten as
\[
G(Y) = \frac{1}{N\sum_{i=1}^NY_i}\int_{0}^1|L_Y^c(t)-L_Y(t)|dt,
\]
where $L_Y^c$ is the unique piecewise linear function that interpolates the points
\[
    \Big(\frac{k}{N},\sum_{i=N+1-k}^NY_{r_i}\Big)=\Big(\frac{k}{N},\sum_{i=1}^KY_{r_{N+1-k}}\Big),
\]
which is also known as the dual Lorentz Curve.\cite{Babaei2025SAFEAI}
It is easy to see that if $X$ and $Y$ are two $N$ dimensional vectors such that $\sum_{i=1}^NX_i =\sum_{i=1}^N Y_i$ then $L_X(t)\ge L_Y(t)$ for every $t\in[0,1]$ if and only if
\begin{equation}
\label{eq:relationLXLY}
    \sum_{i=1}^kX_{r'_i}\ge \sum_{i=1}^kY_{r_i}
\end{equation}
for every $k=1,\dots,N$, where $r_i'$ is the ordering induced by the vector $X$.
Notice that if \eqref{eq:relationLXLY} holds, then we have that $L_X^c(t)\le L_Y^c(t)$ for every $t\in[0,1]$.
The value of $G$ ranges from $0$ (perfect equality) to $1$ (maximal inequality), providing a normalized and dimensionless measure of dispersion that is widely employed in economic and statistical analyses.\cite{Gini1914Concentrazione,Gini1921InequalityIncome}

\subsection{The Rank Graduation Metrics}

Given two $N$ dimensional vectors $Y=(Y_1,Y_2,\dots,Y_N)$ and $Z=(Z_1,Z_2,\dots,Z_N)$ we define the concordance curve between $Y$ and $Z$ as the unique piecewise linear interpolation between the points
\begin{equation}
    \Big(\frac{k}{N},\sum_{i=1}^kY_{r^*_i}\Big),
\end{equation}
where $r^*_i$ is the ordering induced by the vector $Z$.
We denote the concordance curve by $L_{Y,Z}:[0,1]\to[0,\sum_{i=1}^N Y_{r^*_i}]$.

Then, a Rank Graduation metric between $Y$ and $Z$ (that we name $\mathrm{RGX}$) can be defined as the area between the Lorentz curve associated with $Y$, $L_Y$, and the concordance curve between $Y$ and $Z$,  $L_{Y,Z}$, normalized by the Gini index of $Y$.
More formally, we have that
\begin{equation}
    \mathrm{RGX}(Y,Z)=\frac{1}{G(Y)\sum_{i=1}^N Y_i}\int_{0}^1|L_Y^c(t)-L_{Y,Z}(t)|dt=1-\frac{\int_{0}^1|L_{Y,Z}(t)-L_Y(t)|dt}{\int_{0}^1|L_{Y}^c(t)-L_Y(t)|dt},
    \label{RGX}
\end{equation}
where $G(Y)$ is the Gini index of the random variable $Y$.

\subsection{The Whitening Process}
\label{sec:whitening_processes_refined}

In what follows, we denote by $\mathcal{P}_{\Id}(\erre^n)$ the subset of $\mathcal{P}(\erre^n)$ composed of probability measures with identity covariance matrix $\Id$.

\begin{definition}
    A \emph{whitening process} is a linear mapping $\mathcal{S}$ from $\bX\in\mathcal{P}(\erre^n)$ to $\bX^{*} \in \mathcal{P}_{\Id}(\erre^n)$:
    \begin{equation}
        \label{whi1_refined}
        \bX^{*} = \mathcal{S}(\bX) = W_{\bX}\bX = W_{\mu}\bX,
    \end{equation}
    where $W_\bX = W_\mu$ is an $n\times n$ matrix depending on $\bX\sim\mu$. The matrix $W_\mu$ is referred to as the \emph{whitening matrix} associated with $\mathcal{S}$.\cite{Kessy2018Whitening}
\end{definition}

If the covariance matrix $\Sigma_\mu$ of $\bX$ is invertible, the whitening matrix in~\eqref{whi1_refined} satisfies
\begin{equation}
    \label{whi2_refined}
    W_\mu^T W_\mu = \Sigma_\mu^{-1}.
\end{equation}
Condition~\eqref{whi2_refined} does not determine $W_\mu$ uniquely.
Indeed, if $W_\mu$ satisfies~\eqref{whi2_refined}, then any matrix of the form $\widetilde{W}_\mu = ZW_\mu$, with $Z$ orthogonal (i.e., $Z^TZ = \Id$), is also a valid whitening matrix. 
Consequently, several whitening schemes can be defined for a given random vector, and the specific choice depends on the desired properties of the process.\cite{LiZhang1998Sphering}

A key property of certain whitening procedures is \emph{Scale Stability}, ensuring invariance of the whitening transformation under rescaling of the coordinate axes.

\begin{definition}[Scale Stability]
    A whitening process $\mathcal{S}$ is said to be \emph{Scale Stable} if
    \begin{equation}
        \label{eq:scale_stab_refined}
        \mathcal{S}(\bX) = \mathcal{S}(Q\bX),
    \end{equation}
    for every random vector $\bX$ and for any diagonal matrix $Q=\diag(q_1,\dots,q_n)$ with strictly positive entries $q_i>0$ for all $i=1,\dots,n$.
\end{definition}

Examples of scale-stable whitening processes include the \emph{Cholesky Whitening} and the \emph{Zero-Component Analysis} of the correlation matrix (ZCA-cor) whitening.\cite{Auricchio2025MultivariateGini,Auricchio2025WhiteningGini}

\section{The Cramér-Von Mises as a Loss Function}

In this section, we consider the Cramér-von Mises divergence, highlighting its connection with other remarkable metrics and study its properties.

\begin{definition}
    Given any two cumulative distribution functions $F_X,F_Y:\mathbb{R}\to[0,1]$, we define the $p$-th order Cramér - Von Mises divergence between $F_X$ and $F_Y$ as
    \begin{equation}
        \CvM_p (F_X,F_Y) = \int_{-\infty}^{\infty} \big|F_X(u)-F_Y(u)\big|^pdF_X(u).
        \label{Cramér}
    \end{equation}
\end{definition}

%
%
Note that, with a slight abuse of notation, we use either the c.d.f. $F_X$ or its associated random variable as an input to the Cramér-Von Mises metric, so that $\CvM_p(F_X,F_Y)$ and $\CvM_p(X,Y)$ will be used interchangeably as long as $F_X$ is the c.d.f. associated with $X$ and $F_Y$ is the c.d.f. associated with $Y$.

\subsection{The Relation of the Cramér--von Mises with the Other Divergences}

We begin our study considering the Cramér-Von Mises and relating it with the Wasserstein Distance and the Energy Distance.
Given $X\sim \mu$ and $Y\sim\nu$, we define the concordance function between $X$ and $Y$ as the following map
\begin{equation}
    C_{X,Y}:[0,1]\to[0,1]\quad\quad\text{where}\quad\quad C_{X,Y}(q)=F_X(F_Y^{[-1]}(q)).
\end{equation}
In other words, the concordance function $C_{X,Y}$ is the map for which
\begin{equation}
    F_Y^{[-1]}(q)=F_X^{[-1]}(C_{X,Y}(q)),
\end{equation}
that is, $C_{X,Y}$ assigns to every $q\in[0,1]$ the same probability that $\mu$ assigns to the $q$-th percentile of the probability measure $\nu$.
%
%
When both $X$ and $Y$ are supported over $\erre$ and are absolutely continuous, we have $F_X(F^{[-1]}_Y)$ is a cumulative distribution function.
%
%
%
%
Indeed, we have that $\lim_{t\to 1^-}F_X(F^{[-1]}_{Y}(t))=1$.
Moreover, since $F_{Y}^{[-1]}$ and $F_X$ are both monotone increasing, their composition is also monotone increasing.
To conclude, we need to show that $F_X(F_Y^{[-1]})$ is continuous from the right, that is
\[
    \lim_{t\to u^+}F_X(F_Y^{[-1]}(t))=F_X(F_Y^{[-1]}(u)).
\]
This follows from the fact that $Y$ is absolutely continuous.
%

%
We denote by $\mathcal{C}_{X,Y}$ the random variable associated with the cumulative distribution function $C_{X,Y}:=F_X(F_Y^{[-1]})$.
By definition, we have that $\mathcal{C}_{X,Y}$ takes values in $[0,1]$ with probability $1$.
{
Notice that it is possible to drop the assumption on the support of both measures, however, in this case, we need to define $C_{X,Y}(t):=F_X(F_Y^{[-1]})(t)$ for every $t\in[0,1)$ and $C_{X,Y}(1):=1$.
%
}
In the next result, we relate the Cramér-Von Mises of any order with the Wasserstein Distance of order $p$ and, when $p=2$, with the Energy Distance.

\begin{proposition}
    Let $X\sim\mu$ and $Y\sim\nu$ be two absolutely continuous random variable and their associated probability distribution over $\erre$.
    %
    %
    Let us denote by $\mathcal{C}_{X,Y}$ the random variable defined as $(C_{X,Y})_{\#}\mathcal{U}$, where $\mathcal{U}$ is the uniform distribution over $[0,1]$ and $(\circ)_{\#}$ is the push-forward operator.
    We then have the following relation
    \begin{equation}
        \label{eq:CvM_and_W_p}
        \CvM_p(\mu,\nu)=W_p(\mathcal{U},\mathcal{C}_{X,Y}),
    \end{equation}
    where $\mathcal{U}$ is a uniform distribution and $\mathcal{C}_{X,Y}$ a distribution whose c.d.f is given by $F_X(F_Y^{[-1]}(t))$.
    Moreover, if $p=2$, we have that
    \begin{equation}
        \CvM_2(\mu,\nu)=\mathcal{E}(\mathcal{U},\mathcal{C}_{X,Y}).
    \end{equation}
\end{proposition}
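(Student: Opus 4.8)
The plan is to reduce both identities to one-dimensional computations based on the quantile representation of optimal transport. For the first identity I would use that, in dimension one, the $p$-th order transport cost between two laws $\alpha,\beta$ on $\erre$ equals $\int_0^1|F_\alpha^{[-1]}(t)-F_\beta^{[-1]}(t)|^p\,dt$, the monotone coupling being optimal; here $W_p$ is read as this (unrooted) cost, consistently with the normalisation of $\CvM_p$. Since $X$ is absolutely continuous, $F_X$ is continuous and strictly increasing on the support of $\mu$, so the change of variables $q=F_X(u)$, $dF_X(u)=dq$, is licit and maps $\erre$ onto $(0,1)$; it turns $\CvM_p(\mu,\nu)=\int_{\erre}|F_X(u)-F_Y(u)|^p\,dF_X(u)$ into $\int_0^1|q-C_{Y,X}(q)|^p\,dq$, where $C_{Y,X}:=F_Y\circ F_X^{[-1]}$. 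It then remains to recognise this as a transport cost: the quantile function of $\mathcal{U}$ is the identity on $[0,1]$, while $\mathcal{C}_{X,Y}$ has cumulative distribution function $C_{X,Y}=F_X\circ F_Y^{[-1]}$ and hence quantile function $C_{X,Y}^{[-1]}=F_Y\circ F_X^{[-1]}=C_{Y,X}$. Substituting into the quantile formula gives $W_p(\mathcal{U},\mathcal{C}_{X,Y})=\int_0^1|t-C_{Y,X}(t)|^p\,dt$, which coincides with the expression just obtained for $\CvM_p(\mu,\nu)$, establishing \eqref{eq:CvM_and_W_p}.

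For $p=2$ I would instead use the representation of the energy distance as the squared $L^2$ distance between cumulative distribution functions, $\mathcal{E}(\alpha,\beta)=\int_{\erre}(F_\alpha-F_\beta)^2\,dx$ (in the normalisation adopted here). Since $\mathcal{U}$ and $\mathcal{C}_{X,Y}$ are both supported on $[0,1]$ and $F_{\mathcal{U}}(x)=x$ there, this yields $\mathcal{E}(\mathcal{U},\mathcal{C}_{X,Y})=\int_0^1(x-C_{X,Y}(x))^2\,dx$, an integral in the ``space'' variable, whereas the first part writes $\CvM_2$ as the ``quantile'' integral $\int_0^1(t-C_{Y,X}(t))^2\,dt$. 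To reconcile them I would prove the reflection identity $\int_0^1(x-g(x))^2\,dx=\int_0^1(x-g^{-1}(x))^2\,dx$ for any increasing bijection $g$ of $[0,1]$ fixing the endpoints: substituting $x=g(y)$ in the second integral and subtracting the first gives $\int_0^1(g(y)-y)^2(1-g'(y))\,dy=-\big[(g(y)-y)^3/3\big]_0^1$, which vanishes because $g(0)=0$ and $g(1)=1$. Applying this with $g=C_{X,Y}$ and $g^{-1}=C_{Y,X}$ identifies $\mathcal{E}(\mathcal{U},\mathcal{C}_{X,Y})$ with $\CvM_2(\mu,\nu)$.

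I expect the first identity to be routine once the change of variables is set up, so the main obstacle is the $p=2$ statement. The two representations involved are genuinely distinct for arbitrary measures — the energy distance integrates the squared gap of the c.d.f.'s along the state axis, while the transport cost integrates it along the quantile axis — and their coincidence here hinges entirely on the reflection symmetry together with the endpoint conditions $C_{X,Y}(0)=0$, $C_{X,Y}(1)=1$. I would pay particular attention to the normalising constant in the definition of $\mathcal{E}$, so that the identity closes without a stray factor of $2$, and to the absolute continuity and strict monotonicity of $C_{X,Y}$, which guarantee that the substitution and the integration by parts above are justified.
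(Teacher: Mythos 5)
Your proof is correct, and it splits naturally into two parts relative to the paper. For the first identity you take essentially the paper's own route: the change of variables $t=F_\mu(x)$ rewrites $\CvM_p$ as $\int_0^1|t-F_\nu(F_\mu^{[-1]}(t))|^p\,dt$, and the one-dimensional monotone coupling identifies this with the transport cost between $\mathcal{U}$ and $\mathcal{C}_{X,Y}$ (the paper phrases this through the optimal map $F_{\mathcal{C}_{X,Y}}^{[-1]}\circ F_{\mathcal{U}}$, you through the quantile formula; these are the same fact). The genuine divergence is in the $p=2$ case, where your argument is more complete than the paper's. The paper disposes of it in one line by declaring the function produced by the change of variables to be the c.d.f.\ of $\mathcal{C}_{X,Y}$ and invoking the Cram\'er form $\int(F_\alpha-F_\beta)^2$ of the energy distance; but by the proposition's own definition the c.d.f.\ of $\mathcal{C}_{X,Y}$ is $F_\mu\circ F_\nu^{[-1]}$, while the change of variables yields its \emph{inverse} $F_\nu\circ F_\mu^{[-1]}$, i.e.\ the quantile function. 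You spotted exactly this mismatch---the energy distance integrates the squared c.d.f.\ gap in the state variable, the transported $\CvM_2$ in the quantile variable---and bridged it with the reflection identity $\int_0^1(x-g(x))^2\,dx=\int_0^1(x-g^{-1}(x))^2\,dx$ for increasing bijections of $[0,1]$ fixing the endpoints, proved via the substitution $x=g(y)$ and the exact derivative $\tfrac{d}{dy}\,(g(y)-y)^3/3=(g(y)-y)^2(g'(y)-1)$. That lemma is precisely the step the paper's one-line identification glosses over (as written, the paper conflates the c.d.f.\ of $\mathcal{C}_{X,Y}$ with its quantile function), so your proposal matches the paper where the paper is sound and supplies the missing argument where it is not. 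Your closing caveats are also well placed: with the Sz\'ekely normalisation $\mathcal{E}=2\int(F_\alpha-F_\beta)^2$ a stray factor $2$ would appear, so the statement needs the Cram\'er normalisation, and the substitution in the reflection lemma requires $C_{X,Y}$ to be strictly increasing and absolutely continuous, which is where the standing assumptions of absolute continuity and full support enter.
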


\begin{proof}
    Let $\mu$ and $\nu$ be two absolutely continuous distributions.
    By definition, we have that
    \begin{equation}
    \label{eq:rewrite_CVM}
        \CvM_p(\mu,\nu)^p=\int_{\erre}|F_\mu(x)-F_\nu(x)|^pdF_\mu(x)=\int_{0}^1|t-F_\nu (F^{[-1]}_\mu(t))|^pdt,
    \end{equation}
    where in the last equality we have used the change of variable $t=F_\mu(x)$.
    Notice now that the function $F_\mu(F^{[-1]}_\nu(t))$ is a cumulative distribution function associated with $\mathcal{C}_{X,Y}$.
    Owing to the characterization of the solution of the one dimensional optimal transportation problem, we have that the optimal transportation map between the uniform distribution and $\mathcal{C}_{X,Y}$ is 
    \[
        T(t):= F_{\mathcal{C}_{X,Y}}^{[-1]}(F_{\mathcal{U}}(t))=(F_\mu(F^{[-1]}_\nu))^{[-1]}(t)=F_\nu(F_\mu^{[-1]}(t)).
    \]
    We thus conclude that identity \eqref{eq:CvM_and_W_p} holds.

    Lastly, we observe that if $p=2$, equation \eqref{eq:rewrite_CVM} reads as
    \begin{equation}
        \CvM_2(\mu,\nu)=\int_{0}^1|t-F^{[-1]}_\mu(F_\nu(t))|^2dt.
    \end{equation}
    Since $U(t)=t$ is the c.d.f. of the uniform distribution and $t\to F^{[-1]}_\mu(F_\nu(t))$ is the c.d.f. associated with $\mathcal{C}_{X,Y}$, we conclude the proof.
\end{proof}

\subsection{Two Tradeoff Analyses for the Cramér-Von Mises Divergence}

In this section, we present two Bias-Variance tradeoffs that hold valid for machine learning models using the Cramér-von Mises $\CvM_2$ as loss function.
The first tradeoff we propose is the classic Variance-Bias tradeoff, in which the error between the trained model and the real distribution is decomposed as the sum of a term that measures the divergence between the trained model and the average model plus the divergence between the average model and the real model.
In the second tradeoff, we consider the error induced by a not-fully trained model and propose a tradeoff that accounts for any early stops in the training process.

\subsubsection{Classic Bias-Variance Tradeoff}

Let $Y$ be a real-valued random variable with an associated (unknown) cumulative distribution function (CDF) $F$. 
Consider a dataset partition $D$ used to train a machine learning model. 
From this partition, one can define an optimal model $F_D$, obtained by minimizing the Cramér--von Mises loss over the convex set $\mathcal{F}_D$ of all CDFs that are learnt from $D$ to approximate $F$:
\[
    F_D = \arg\min_{G \in \mathcal{F}_D} \int_{\mathbb{R}^d} (G - F)^2 \, dF,
\]
where $F$ is the true underlying distribution. In practice, after training the model on $D$, we obtain a specific cumulative distribution function $F_T \in \mathcal{F}_D$ representing the learned approximation of $F$ from the dataset $D$ available during the training.
If we assume that the partition $D$ is repeatedly sampled from a random variable $\xi$, we can define an average trained model as $\hat F=\EE_\xi[F_D]$.
The average $\hat F$ allows us to decompose the expected training error of the model with respect to the Cramér-Von Mises divergence as the sum of the error of the average trained model, with respect to the true model (bias), plus the sample variance of $F$ deriving from  $\xi$.
%




\begin{theorem}
\label{thm:first_tradeoff_VB}
    We have that
    \begin{align}
        \nonumber\mathbb{E}_\xi\bigg[\int_{\erre^d}(F_D-F)^2dF\bigg] &= \mathbb{E}_\xi\bigg[\int_{\erre^d}(F_D-\hat F)^2dF\bigg] + \mathbb{E}_\xi\bigg[\int_{\erre^d}(\hat F-F)^2dF\bigg]\\
        &=\mathbb{E}_\xi\bigg[\int_{\erre^d}(F_D-\hat F)^2dF\bigg] + \int_{\erre^d}(\hat F-F)^2dF,
        \label{biasvariance}
    \end{align}
    regardless of the probability distribution $\xi\in\PP(\mathcal{D})$.
\end{theorem}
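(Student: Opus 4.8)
The plan is to carry out the standard algebraic expansion of the squared deviation and then exploit the defining property $\hat F = \mathbb{E}_\xi[F_D]$ to kill the cross term. Concretely, I would write, for each fixed $x\in\erre^d$,
\[
    (F_D(x)-F(x))^2 = (F_D(x)-\hat F(x))^2 + 2(F_D(x)-\hat F(x))(\hat F(x)-F(x)) + (\hat F(x)-F(x))^2,
\]
then integrate both sides against $dF$ and apply the expectation $\mathbb{E}_\xi$. Linearity of the integral and of the expectation reduces the claim to the analysis of the three resulting terms.

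The central step is to show that the expectation of the cross term vanishes. Since the factor $\hat F(x)-F(x)$ is deterministic (it depends on $\xi$ only through the already-averaged model $\hat F$), I would invoke Fubini to interchange $\mathbb{E}_\xi$ with $\int\cdot\,dF$, pull the deterministic factor outside the $\xi$-expectation, and reduce the cross term to
\[
    2\int_{\erre^d}(\hat F(x)-F(x))\,\mathbb{E}_\xi[F_D(x)-\hat F(x)]\,dF(x).
\]
By the very definition $\hat F=\mathbb{E}_\xi[F_D]$, understood pointwise, the inner expectation $\mathbb{E}_\xi[F_D(x)]-\hat F(x)$ is identically zero, so the whole cross term drops out. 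The two surviving terms are precisely the variance term $\mathbb{E}_\xi[\int (F_D-\hat F)^2\,dF]$ and the bias term $\mathbb{E}_\xi[\int (\hat F-F)^2\,dF]$, which establishes the first equality in \eqref{biasvariance}.

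For the second equality I would simply observe that the integrand $(\hat F-F)^2$ no longer involves $F_D$ and hence does not depend on the sampling variable $\xi$; its $\xi$-expectation therefore equals itself, which is exactly why the identity holds regardless of the distribution $\xi\in\mathcal{P}(\mathcal{D})$.

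The main obstacle I anticipate is one of rigor rather than of idea: justifying the interchange of $\mathbb{E}_\xi$ with the integral $\int\cdot\,dF$ via Fubini--Tonelli, which requires the integrands to be jointly integrable. This is immediate here, since every CDF takes values in $[0,1]$, so each integrand is uniformly bounded by a constant, while $F$ is a probability measure and thus assigns finite mass to the domain; boundedness together with finite total mass yields the required integrability. The only other point to check is that the pointwise identity $\hat F(x)=\mathbb{E}_\xi[F_D(x)]$ used inside the integral is consistent with the global definition $\hat F=\mathbb{E}_\xi[F_D]$, which holds as long as the average is interpreted pointwise in $x$.
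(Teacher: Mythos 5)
Your proposal is correct and follows essentially the same route as the paper's proof: the same pointwise expansion of the square, the same cancellation of the cross term via $\hat F=\mathbb{E}_\xi[F_D]$, and the same Fubini--Tonelli interchange of $\mathbb{E}_\xi$ with $\int\cdot\,dF$ (you merely apply the interchange before killing the cross term, whereas the paper kills it pointwise first and then swaps). Your added justification of the interchange via the uniform boundedness of CDFs, and your explicit handling of the second equality, are small refinements of rigor over the paper's argument, not a different method.
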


Equation \ref{biasvariance} defines a bias variance trade-off. As we increase model complexity, for example increasing the number of parameters of the trained model $F$, bias is reduced, but variance increases: it is not possible to reduce both.  

\begin{proof}
    For any $D\in\mathcal{D}$, we have
    \begin{equation}
        (F_D-F)^2=(F_D-\hat F)^2+(\hat F- F)^2+2(F_D-\hat F)(\hat F- F).
    \end{equation}
    If we take the expected value with respect to $\xi$, we have that
    \begin{equation}
    \label{eq:decomp_expected}
        \mathbb{E}_\xi\Big[(F_D-F)^2\Big] = \mathbb{E}_\xi\Big[(F_D-\hat F)^2\Big] +\mathbb{E}_\xi\Big[(\hat F-F)^2\Big]
    \end{equation}
    since
    \begin{equation}
        \mathbb{E}\Big[(F_D-\hat F)(\hat F- F)\Big] = (\hat F- F)\mathbb{E}\Big[(F_D-\hat F)\Big] = (\hat F- F)\mathbb{E}(\big[F_D\big]-\hat F) = 0
    \end{equation}
    by definition of $\hat F$.
    Finally, if we integrate \eqref{eq:decomp_expected} with respect to $dF$, we have that
    \begin{equation}
        \int_{\erre^d}\mathbb{E}_\xi\Big[(F_D-F)^2\Big]dF = \int_{\erre^d}\mathbb{E}_\xi\Big[(F_D-\hat F)^2\Big]dF +\int_{\erre^d}\mathbb{E}_\xi\Big[(\hat F-F)^2\Big]dF.
    \end{equation}
    To conclude the thesis, it suffices to swap the integral order through Tonelli Fubini's theorem.
\end{proof}

\subsubsection{A Global Bias-Variance Tradeoff}

We then come to propose a different tradeoff in which we account for the fact that the training procedure of a model is often based on numerical approximations.
This trade-off decomposes the error we commit between the best possible model that can be generated by the data available, which we denote by $F_D$, the true model $F$, and a suboptimal model $F_T$, a numerical approximation of $F_D$.
In particular, we are able to decompose the error induced by $F_T$ as the sum of the error due to the approximation and the error due to the estimation.
Before we introduce the second tradeoff, we introduce a classic technical lemma.

\begin{lemma}
\label{lmm:decomoposition_erro}
    For any dataset $D$, we have that
    \begin{equation}
        \int_{\erre^d}(H-F_D)(F_D-F)dF=0
    \end{equation}
    for every $H\in\mathcal{F}_D$.
\end{lemma}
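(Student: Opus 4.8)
The plan is to read the claimed identity as the first-order (orthogonality) condition satisfied by the $L^2(dF)$-projection $F_D$ of the true law $F$ onto the model class $\mathcal{F}_D$. Recall that $F_D$ minimizes the strictly convex quadratic functional $\Phi(G)=\int_{\erre^d}(G-F)^2\,dF$ over $\mathcal{F}_D$, so I would extract the stationarity condition of this constrained minimization and check that its linear term is exactly the cross term in the statement.

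First I would fix an arbitrary $H\in\mathcal{F}_D$ and exploit the convexity of $\mathcal{F}_D$ to form the admissible perturbation $F_D+t(H-F_D)\in\mathcal{F}_D$ for $t\in[0,1]$, then introduce the scalar function
\[
    \phi(t)=\int_{\erre^d}\big(F_D+t(H-F_D)-F\big)^2\,dF.
\]
Expanding the square produces a quadratic polynomial in $t$,
\[
    \phi(t)=\phi(0)+2t\int_{\erre^d}(F_D-F)(H-F_D)\,dF+t^2\int_{\erre^d}(H-F_D)^2\,dF,
\]
whose linear coefficient is twice the integral we must show vanishes. Since $F_D$ minimizes $\Phi$ over $\mathcal{F}_D$, the point $t=0$ minimizes $\phi$, so its derivative there is controlled, and this immediately pins down $\int_{\erre^d}(F_D-F)(H-F_D)\,dF$.

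The one delicate point, and the main obstacle, is upgrading a one-sided to a two-sided stationarity condition. Restricting to $t\in[0,1]$ only yields $\phi'(0)\ge 0$, i.e. the variational inequality $\int_{\erre^d}(H-F_D)(F_D-F)\,dF\ge 0$; the equality asserted in the lemma additionally requires that $F_D$ may be perturbed in the direction $-(H-F_D)$ without leaving the class. I would secure this by using that $\mathcal{F}_D$ is the affine family of CDFs realizable from $D$ (equivalently, that $F_D$ lies in its relative interior), so that $2F_D-H\in\mathcal{F}_D$ as well; applying the inequality to both $H$ and $2F_D-H$ forces the cross term to be simultaneously $\ge 0$ and $\le 0$, hence exactly $0$. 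A cleaner equivalent route is to note that $\mathcal{F}_D$ spans an affine subspace of the Hilbert space $L^2(dF)$ and invoke the projection theorem, whose residual $F-F_D$ is by construction orthogonal to every displacement $H-F_D$ inside that subspace, which is the claim.
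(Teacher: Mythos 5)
Your proposal follows the same variational route as the paper's own proof: perturb $F_D$ along the segment $H_\lambda = F_D + \lambda(H-F_D)$, which convexity keeps inside $\mathcal{F}_D$, expand the resulting quadratic in $\lambda$, and read off the linear coefficient as the cross term to be killed. Where you differ is the final step, and the difference is to your credit. The paper simply asserts $\partial_\lambda V_H\big|_{\lambda=0}=0$ and concludes; you correctly observe that because $\lambda$ is confined to $[0,1]$, minimality at the endpoint $\lambda=0$ yields only the one-sided condition $\phi'(0)\ge 0$, i.e.\ the variational inequality $\int_{\mathbb{R}^d}(H-F_D)(F_D-F)\,dF\ge 0$. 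This is a genuine gap in the paper's argument, not a pedantic one: for a merely convex class the asserted equality can fail. Already the finite-dimensional analogue shows this: project $F=(1,0)$ onto the convex half-plane $\{(x,y): x\le 0\}$ in $\mathbb{R}^2$, so $F_D=(0,0)$; taking $H=(-1,0)$ gives cross term $\langle H-F_D,\,F_D-F\rangle = 1\ne 0$.

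Your patch --- requiring $2F_D-H\in\mathcal{F}_D$ (equivalently, $\mathcal{F}_D$ affine, or $F_D$ in its relative interior), so that the inequality can be applied to both $H$ and its reflection and squeezed to equality --- is exactly what restores the two-sided condition, and it coincides with your cleaner alternative of invoking the Hilbert-space projection theorem onto an affine subspace of $\mathrm{L}^2(dF)$. Be aware, however, that this is an assumption beyond what the paper states: $\mathcal{F}_D$ is only declared convex, and the set of CDFs realizable by a model class is typically convex but not affine. So strictly speaking both your proof and the paper's require this extra hypothesis; yours has the merit of making it explicit, while the paper's conceals the issue by treating a boundary minimum as an interior stationary point.
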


\begin{proof}
    Let us fix a dataset partition $D$.
    First, since $\mathcal{F}_D$ is convex, we have that, given $F_D$ and given any $H\in\mathcal{F}_D$, it holds
    \[
        (1-\lambda)F_D+\lambda H = F_D+\lambda(H-F_D)\in\mathcal{F}_D
    \]
    for every $\lambda\in[0,1]$.
    Given $H\in\mathcal{F}_D$, we define $H_\lambda=F_D+\lambda(H-F_D)$.
    Since $F_D$ minimizes the function
    \begin{equation}
        G\to\int_{\erre^d}(G-F)^2dF,
    \end{equation}
    we have that the function
    \begin{align}
        \label{eq:lambda_euqation}
        V_H:\lambda &\to \int_{\erre^d}(H_\lambda-F)^2dF = \int_{\erre^d}(F_D-F+\lambda(H-F_D))^2dF\\
       \nonumber &=\int_{\erre^d}(F_D-F)^2dF+2\lambda\int_{\erre^d}(H-F_D)(F_D-F)dF+\lambda^2\int_{\erre^d}(H-F_D)^2dF
    \end{align}
    attains minimum in $\lambda =0$.
    In particular, by taking the derivative of \eqref{eq:lambda_euqation}, we must have that
    \begin{equation}
        \partial_\lambda (V_H)_{\Big|\lambda=0}=0.
    \end{equation}
    A simple computation shows that
    \begin{equation}
         \partial_\lambda (V_H)=2\bigg( \int_{\erre^d}(H-F_D)(F_D-F)dF+\lambda\int_{\erre^d}(H-F)^2dF\Bigg),
    \end{equation}
    thus it must be
    \begin{equation}
        2 \int_{\erre^d}(H-F_D)(F_D-F)dF=0,
    \end{equation}
    which concludes the proof.
\end{proof}

As a corollary of the previous proposition, we have that for any model $F_T$ obtained by training a ML model over a data partition $D$ the error induced by the model $F_T$ can be decomposed as the sum of a sub-optimality error and an error due to the estimation, based on the dataset $D$.
Noticeably, this relation holds even if we take a distribution over the set of possible partitions $D$.
\begin{theorem}
Let us denote by $F_T$ the model obtained from a machine learning system and let us denote by $F_D$ the optimal model obtainable from a dataset $D$.
Then we have that
\begin{equation}
\label{eq:decomposition_error}
    \int_{\erre^d}(F_T-F)^2dF=\int_{\erre^d}(F_T-F_D)^2dF+\int_{\erre^d}(F_D-F)^2dF.
\end{equation}
If the dataset $D$ is sampled from a random variable, namely $\xi$, we have that
\begin{equation}
\label{eq:decomposition_error_avg}
    \int_{\erre^d}(F_T-F)^2dF=\EE\Bigg[\int_{\erre^d}(F_T-F_D)^2dF\Bigg]+\EE\Big[\int_{\erre^d}(F_D-F)^2dF\Bigg].
\end{equation}
\end{theorem}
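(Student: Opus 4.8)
The plan is to treat Lemma~\ref{lmm:decomoposition_erro} as a Pythagorean (orthogonality) relation in the Hilbert space $L^2(dF)$ and to reduce the first identity to a single application of it. First I would split the error of the suboptimal model as
\[
    F_T-F=(F_T-F_D)+(F_D-F)
\]
and expand the square pointwise, obtaining
\[
    (F_T-F)^2=(F_T-F_D)^2+(F_D-F)^2+2(F_T-F_D)(F_D-F).
\]
Integrating against $dF$ over $\erre^d$ yields the two wanted terms together with the cross term $2\int_{\erre^d}(F_T-F_D)(F_D-F)\,dF$.

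The decisive step is to show that this cross term vanishes. Since $F_T$ is, by definition, a model learnt from the partition $D$, it lies in the convex class $\mathcal{F}_D$; hence Lemma~\ref{lmm:decomoposition_erro} applies with the choice $H=F_T$ and gives immediately $\int_{\erre^d}(F_T-F_D)(F_D-F)\,dF=0$. This establishes~\eqref{eq:decomposition_error}. Conceptually, $F_D$ is the $L^2(dF)$-projection of the true law $F$ onto $\mathcal{F}_D$, so the approximation error $F_T-F_D$ and the estimation error $F_D-F$ are orthogonal, which is exactly what the Pythagorean identity records.

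For the averaged identity~\eqref{eq:decomposition_error_avg}, I would note that the pointwise decomposition just proved holds for every realization of the random partition $D\sim\xi$, because $F_T$ and $F_D$ depend measurably on $D$ while the membership $F_T\in\mathcal{F}_D$ persists for each such $D$. Applying the expectation $\EE$ to both sides of~\eqref{eq:decomposition_error} and invoking its linearity then produces the stated sum of expected approximation and estimation errors on the right-hand side.

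I do not expect a substantive obstacle: the whole argument hinges on Lemma~\ref{lmm:decomoposition_erro}, and the only hypothesis to verify is the membership $F_T\in\mathcal{F}_D$, which is built into the very definition of a trained model. The single point worth flagging is notational: the left-hand side of~\eqref{eq:decomposition_error_avg} also depends on $D$, so to make the two sides match it should be read under the same expectation $\EE$, i.e.\ the averaged relation is obtained by applying $\EE$ uniformly to the already-established pointwise identity.
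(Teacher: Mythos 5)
Your proposal is correct and takes essentially the same route as the paper: expand $(F_T-F)^2$ via the splitting $F_T-F=(F_T-F_D)+(F_D-F)$, annihilate the cross term by applying Lemma~\ref{lmm:decomoposition_erro} with $H=F_T\in\mathcal{F}_D$, and obtain \eqref{eq:decomposition_error_avg} by linearity of the expectation. Your closing remark is also well taken: since $F_T$ depends on $D$, the left-hand side of \eqref{eq:decomposition_error_avg} should itself be read under $\EE$, a point the paper leaves implicit.
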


\begin{proof}
    By definition, we have that
    \begin{equation*}
        \int_{\erre^d}(F_T-F)^2dF=\int_{\erre^d}(F_T-F_D)^2dF+\int_{\erre^d}(F_D-F)^2dF+2\int_{\erre^d}(F_T-F_D)(F_D-F)dF.
    \end{equation*}
    Owing to Lemma \ref{lmm:decomoposition_erro}, we then infer \eqref{eq:decomposition_error}.
    The relation in \eqref{eq:decomposition_error_avg} follows by the linearity of the expected value.
\end{proof}

To conclude, notice that, owing to Theorem \ref{thm:first_tradeoff_VB}, the identity in \eqref{eq:decomposition_error_avg} can be rewritten as
\begin{equation*}
    \int_{\erre^d}(F_T-F)^2dF=\EE\Bigg[\int_{\erre^d}(F_T-F_D)^2dF\Bigg]+\EE\Big[\int_{\erre^d}(F_D-\hat F)^2dF\Bigg]+\int_{\erre^d}(\hat F- F)^2dF,
\end{equation*}
where $\hat F=\mathbb{E}_{\xi\sim D}[F_D]$ is the average model.

This shows that the total error caan be decomposed into: approximation error, plus bias, plus variance, thereby extending the classical bias-variance trade off.

\section{The $RGX_p$ Metrics}

As we have seen in \eqref{RGX} the Rank Graduation Metrics is the ratio between two areas
\begin{equation}
    \mathrm{RGX}(Y,Z)=1-\frac{\int_{0}^1|L_{Y,Z}(t)-L_Y(t)|dt}{\int_{0}^1|L^c_{Y}(t)-L_Y(t)|dt}=1-\frac{\frac{1}{N\bar Y}\int_{0}^1|L_{Y,Z}(t)-L_Y(t)|dt}{\frac{1}{N\bar Y}\int_{0}^1|L^c_{Y}(t)-L_Y(t)|dt},
\end{equation}
where $L_{Y,Z}(t)$ is the linear interpolation of the points in $Y$ ordered according to the ordering of $Z$ and $\bar Y$ is the mean value of $Y$, so that $N\bar Y=\sum_{i=1}^N Y_i$.
Owing to the fact that $\mathrm{RGX}(Y,Z)$ is the ratio of two positive quantities and owing to the fact that it holds
\begin{equation}
\label{eq:inequalities_linear_Lorentz}
    L_Y(t)\le L_{Y,Z}(t)\le L^c_Y(t)
\end{equation}
for every $t\in[0,1]$, we have $0\le \mathrm{RGX}(Y,Z)\le 1$.
In particular, we have that the RGX can be interpreted as the percentage of variability that can not be explained by the ordering induced by $Z$.
We can then generalize the argument to define an $l_p$ version of the $\mathrm{RGX}$ metric.

\begin{definition}
\label{def:RGX_p}
    Let $Y$ and $Z$ be two $N$ dimensional and positive vectors.
    We define the $\mathrm{RGX}_p$ metric as follows
    \begin{equation}
    \label{eq:RGX_p}
        \mathrm{RGX}_p(Y,Z)=1-\frac{\int_{0}^1|L_{Y,Z}(t)-L_Y(t)|^pdt}{\int_{0}^1|L^c_{Y}(t)-L_Y(t)|^pdt}=1-\frac{\frac{1}{(N\bar Y)^p}\int_{0}^1|L_{Y,Z}(t)-L_Y(t)|^pdt}{\frac{1}{(N\bar Y)^p}\int_{0}^1|L^c_{Y}(t)-L_Y(t)|^pdt},
    \end{equation}
    where $r^*_i$ is the ordering induced by $Z$ and $r_i$ is the ordering induced by $Y$.
\end{definition}
First, we observe that if $p=1$, the $\mathrm{RGX}_p$ coincides with the $\mathrm{RGX}$ metric introduced in Ref. \cite{Babaei2025SAFEAI}.
Moreover, whenever $p>0$, we have that \eqref{eq:inequalities_linear_Lorentz} implies
\begin{equation}
0\le (L_{Y,Z}(t)-L_Y(t))^p \le (L^c_{Y}(t)-L_Y(t))^p,
\end{equation}
so that $0\le \mathrm{RGX}_p(Y,Z)\le 1$ for every $Y$ and $Z$.
In particular, we show that $\mathrm{RGX}_p$ is the percentage of variability that is explained by the ordering of $Z$.
To do that, we first show that the $p$-th root of the denominator of \eqref{eq:RGX_p} is a variability measure.
For the sake of notation, we denote by $S_p(Y)$ the denominator of \eqref{eq:RGX_p}, that is
\begin{equation}
    S_p(Y)=\frac{1}{N\bar Y}\Big(\int_{0}^1|L^c_{Y}(t)-L_Y(t)|^pdt\Big)^{\frac{1}{p}}.
\end{equation}

\subsection{$S_p$ as a Variability Measure}

To check that $S_p$ is a variability measure, we prove that $S_p$ possesses all the six properties outlined in Ref. \cite{Hurley2009SparsityMeasures}.
This result is interesting on its own, as it defines a new variability index that, just like the Gini Index\cite{Gini1914Concentrazione,Gini1921InequalityIncome} and the $l_{q,p}$-index, possesses all the six axiomatic properties that define a variability measure.

\subsubsection{The Scale Invariance Property}
First, we check that $S_p$ is scale invariant, meaning that multiplying the entries of $Y$ by a positive constant $\gamma$ does not change the variability of the vector, that is $S_p(\gamma Y)=S_p(Y)$ for any $\gamma>0$ and any positive vector $Y$.

\begin{proposition}
    The Index $S_p$ is scale invariant.
\end{proposition}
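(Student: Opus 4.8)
The plan is to exploit the positive homogeneity of all the ingredients of $S_p$. The crucial observation is that multiplying $Y$ by a constant $\gamma>0$ leaves the ordering of the entries unchanged, so the increasing reordering $r_i$ is the same for $Y$ and for $\gamma Y$; only the interpolated values get rescaled. First I would establish that both the Lorenz curve and its dual are homogeneous of degree one under positive scaling, namely
\[
    L_{\gamma Y}(t)=\gamma L_Y(t)\qquad\text{and}\qquad L^c_{\gamma Y}(t)=\gamma L^c_Y(t)
\]
for every $t\in[0,1]$. This follows directly from their definitions as piecewise linear interpolations of the points $\big(k/N,\sum_{i=1}^k Y_{r_i}\big)$ and $\big(k/N,\sum_{i=N+1-k}^N Y_{r_i}\big)$, since each partial sum scales by $\gamma$ while the abscissae $k/N$ are unaffected.

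Next I would substitute these identities into the integral appearing in the definition of $S_p$. Pulling the common factor $\gamma$ out of the absolute value and then out of the $p$-th power yields
\[
    \Big(\int_{0}^1|L^c_{\gamma Y}(t)-L_{\gamma Y}(t)|^p\,dt\Big)^{1/p}
    =\gamma\Big(\int_{0}^1|L^c_{Y}(t)-L_{Y}(t)|^p\,dt\Big)^{1/p},
\]
so the numerator part of $S_p$ is homogeneous of degree one. At the same time, the normalizing factor satisfies $N\,\overline{\gamma Y}=\gamma N\bar Y$, since summing the rescaled entries pulls out a factor of $\gamma$.

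Finally I would combine the two scalings: the factor $\gamma$ from the $p$-th root cancels exactly against the factor $\gamma$ in the denominator $N\,\overline{\gamma Y}$, giving $S_p(\gamma Y)=S_p(Y)$, as desired. I do not expect any genuine obstacle here: the entire argument rests on the single point that positive scaling preserves the order statistics, after which everything reduces to the degree-one homogeneity of the Lorenz curves and the linearity of the mean. The only place demanding a moment of care is justifying that the reordering $r_i$ is invariant under multiplication by $\gamma>0$, which is immediate because $a\le b$ if and only if $\gamma a\le\gamma b$ for positive $\gamma$.
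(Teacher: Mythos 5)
Your proof is correct and follows essentially the same route as the paper's: both arguments rest on the observation that positive scaling preserves the ordering of the entries, so that $L_{\gamma Y}=\gamma L_Y$ and $L^c_{\gamma Y}=\gamma L^c_Y$, after which the factor $\gamma$ extracted from the $p$-th root cancels against the $\gamma$ in the normalizing mean $N\,\overline{\gamma Y}=\gamma N\bar Y$. No gaps; your write-up is if anything slightly more explicit than the paper's about the dual Lorenz curve.
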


\begin{proof}
    This follows trivially by the definition of $S_p$.
    Indeed, let $Y$ be a positive $N$-dimensional vector and $\gamma>0$, then
    \begin{align*}
        S_p(\gamma Y)&=\frac{1}{\gamma N\bar Y}\bigg(\int_{0}^1|L^c_{\gamma Y}(t)-L_{\gamma Y}(t)|^pdt\bigg)^{\frac{1}{p}}\\
        &=\frac{1}{N\bar Y}\bigg(\int_{0}^1|L^c_{ Y}(t)-L_{ Y}(t)|^pdt\bigg)^{\frac{1}{p}}=S_p(Y),
    \end{align*}
    where we have used the fact that \begin{enumerate*}[label=(\roman*)]
        \item the ordering induced by $Y$ and $\gamma Y$ is the same and
        \item $\frac{1}{\gamma N\bar Y}L_{\gamma Y}(t)=\frac{1}{N\bar Y}\gamma L_Y(t)$ for every $t\in[0,1]$ since both functions interpolate the same points.
    \end{enumerate*} 
\end{proof}

\subsubsection{The Rising Tide Property}
We then check the rising tide property, which states that if we increase all the entries of a positive vector by a common positive value $c$, the variability decreases, that is $S_p(Y+c)\le S_p(Y)$ for every $c\ge 0$.

\begin{theorem}
    The index $S_p$ has the rising tide property.
\end{theorem}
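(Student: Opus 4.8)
The plan is to exploit the fact that adding a constant $c\ge 0$ to every entry of $Y$ leaves both Lorenz curves shifted by the \emph{same} affine function, so that their difference---the quantity inside the integral defining $S_p$---is completely unchanged, while the normalising mean strictly increases. Concretely, first I would note that the ordering induced by $Y+c$ is identical to the one induced by $Y$, since adding a common constant preserves the relative order of the entries; hence the reordered values are simply $(Y+c)_{r_i}=Y_{r_i}+c$ with the same permutation $r_i$.

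Next I would compute the effect of the shift on the two interpolating curves. The Lorenz curve $L_{Y+c}$ interpolates the points $\big(k/N,\ \sum_{i=1}^k(Y_{r_i}+c)\big)=\big(k/N,\ \sum_{i=1}^kY_{r_i}+kc\big)$, and since $kc=Nc\cdot(k/N)$ at each node $t=k/N$, the function $L_Y(t)+Nc\,t$ agrees with $L_{Y+c}(t)$ at every breakpoint. Both are piecewise linear with the same nodes, so they coincide everywhere, giving $L_{Y+c}(t)=L_Y(t)+Nc\,t$. The identical computation applied to the dual curve yields $L^c_{Y+c}(t)=L^c_Y(t)+Nc\,t$.

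From these two identities the crucial cancellation follows: $L^c_{Y+c}(t)-L_{Y+c}(t)=L^c_Y(t)-L_Y(t)$ for every $t\in[0,1]$, so that the integral $\int_0^1|L^c_{Y+c}(t)-L_{Y+c}(t)|^p\,dt$ equals $\int_0^1|L^c_Y(t)-L_Y(t)|^p\,dt$. Only the prefactor changes: the mean of $Y+c$ is $\bar Y+c$, hence $N(\bar Y+c)=N\bar Y+Nc\ge N\bar Y$. Combining,
\[
S_p(Y+c)=\frac{1}{N(\bar Y+c)}\Big(\int_0^1|L^c_Y(t)-L_Y(t)|^p\,dt\Big)^{1/p}\le \frac{1}{N\bar Y}\Big(\int_0^1|L^c_Y(t)-L_Y(t)|^p\,dt\Big)^{1/p}=S_p(Y),
\]
which is exactly the rising tide property.

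The only point requiring genuine care---and thus the main (albeit mild) obstacle---is justifying the affine shift formulas $L_{Y+c}=L_Y+Nc\,t$ and $L^c_{Y+c}=L^c_Y+Nc\,t$ rigorously: one must verify that the shifted interpolation points lie on the original curve plus the line $Nc\,t$ at \emph{all} nodes $k/N$, and then invoke that two piecewise linear functions with common breakpoints coinciding at those breakpoints are identical. Once this is established, the rest is a one-line monotonicity argument in $c$.
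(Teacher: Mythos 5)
Your proof is correct, and it takes a genuinely different (and cleaner) route than the paper's. The paper first invokes scale invariance to normalize $\sum_{i=1}^N Y_i=1$, and then proves a \emph{pointwise domination} of the normalized curves: it shows $\sum_{i=1}^k Y_{r_i}\le \frac{kc+\sum_{i=1}^k Y_{r_i}}{1+Nc}$ (which reduces to $N\sum_{i=1}^k Y_{r_i}\le k$, true since the $k$ smallest entries have average at most the global average), so that the normalized Lorenz curve of $Y_c$ lies above that of $Y$, and symmetrically the normalized dual curve lies below; the gap therefore shrinks pointwise and the integral comparison follows. You instead observe that the shift acts as an exact affine translation on the \emph{unnormalized} curves, $L_{Y+c}(t)=L_Y(t)+Nct$ and $L^c_{Y+c}(t)=L^c_Y(t)+Nct$ (justified correctly: agreement at all nodes $k/N$ plus piecewise linearity on the same partition), so the difference $L^c-L$ is literally unchanged and the entire effect comes from the prefactor $\frac{1}{N(\bar Y+c)}\le\frac{1}{N\bar Y}$. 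Your argument buys an exact quantitative relation,
\begin{equation*}
S_p(Y+c)=\frac{\bar Y}{\bar Y+c}\,S_p(Y),
\end{equation*}
hence strict decrease for $c>0$, with no normalization bookkeeping; the paper's argument buys a structurally stronger intermediate fact (Lorenz dominance of the normalized curves under the shift), which is of independent interest but is not needed for the rising tide property itself.
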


\begin{proof}
    Given a positive vector $Y$, let $c\ge 0$ be a positive constant, and let $Y_c$ be the vector defined as
    \[
        (Y_c)_i=Y_i+c
    \]
    for every $i=1,\dots,N$.
    Owing to the scale invariance of $S_p$, we can assume without loss of generality that $\sum_{i=1}^NY_i=1$.
    First, we notice that the ordering induced by $Y$ is the same one induced by $Y_c$.
    To conclude that $S_p$ possesses the raising tide property it the suffices to prove that
    \begin{equation}
    \label{eq:inequality}
        \sum_{i=1}^kY_{r_i}\le \frac{kc+\sum_{i=1}^kY_{r_i}}{1+Nc},
    \end{equation}
    as this implies $\frac{L_{Y}(t)}{\sum_{i=1}^N Y_i} \le \frac{L_{Y_c}(t)}{NC+\sum_{i=1}^N Y_i}$ for every $t\in[0,1]$, as $L_Y$ and $L_{Y_c}$ are the piecewise-linear interpolations of the points $(k/N,\sum_{i=1}^kY_{r_i})$ and $(k/N,\frac{kc+\sum_{i=1}^kY_{r_i}}{1+Nc})$.
    We have that \eqref{eq:inequality} is equivalent to
    \begin{align*}
        \sum_{i=1}^kY_{r_i}\Big(\frac{Nc}{1+Nc}\Big)=N\sum_{i=1}^kY_{r_i}\Big(\frac{c}{1+Nc}\Big)\le \frac{kc}{1+Nc}=k\frac{c}{1+Nc},
    \end{align*}
    or equivalently
    \[
        N\sum_{i=1}^kY_{r_i}\le k,
    \]
    which holds because $r_i$ is the increasing reordering of $Y$.
    A similar argument allows us to conclude that
    \[
        L^c_{Y_c}(t) \le  L^c_{Y}(t),
    \]
    which implies $0\le (L^c_{Y_c}(t) -  L_{Y_c}(t))^p \le (L^c_{Y}(t) \le  L_{Y}(t))^p$ for every $t\in[0,1]$ and every $p>0$ and thus the thesis.
\end{proof}

\subsubsection{The Cloning Property}
We then check the Cloning property, which states that the variability measured by the index does not change if we measure the variability of $Y=(Y_1,Y_2,\dots, Y_N)$ or the variability of $Y'=(Y_1,Y_1,Y_2,Y_2,\dots,Y_N,Y_N)$.
It is easy to see that the reordering of the vector $Y'$ is given by 
\begin{equation}
    r'_i=\begin{cases}
        r_{\frac{i}{2}} \quad\quad\quad\quad \text{if $i$ is even} \\
        r_{\frac{i+1}{2}} \;\quad\quad\quad \text{otherwise}
    \end{cases}
\end{equation}
where $r_i$ is the ordering of $Y$ for every $i=1,2,\dots, N$.
We then have the following.

\begin{proposition}
    The $S_p$ index has the cloning property, that is $S_p(Y)=S_p(Y')=S_p(Y,Y)$ for any positive vector $Y$.
\end{proposition}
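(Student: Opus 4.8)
The plan is to track precisely how the cloning operation transforms the Lorenz curve and its dual, and then to observe that two competing factors of $2$ cancel. First I would record the order statistics of $Y'$. Feeding the given reordering $r'_i$ into the definition shows that the sorted entries of $Y'$ are $Y_{r_1},Y_{r_1},Y_{r_2},Y_{r_2},\dots,Y_{r_N},Y_{r_N}$, so each order statistic of $Y$ is repeated consecutively. In particular the sum of the entries of $Y'$ equals $2N\bar Y$ and its dimension is $2N$, so the prefactor in $S_p(Y')$ is $\tfrac{1}{2N\bar Y}$.

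The core of the argument is to prove the exact scaling identities $L_{Y'}(t)=2L_Y(t)$ and $L^c_{Y'}(t)=2L^c_Y(t)$ for every $t\in[0,1]$. At the coarse nodes $t=k/N=2k/(2N)$ this is immediate, since the partial sum of the first $2k$ entries of $Y'$ is $2\sum_{i=1}^k Y_{r_i}$, giving $L_{Y'}(k/N)=2L_Y(k/N)$. The delicate point is the intermediate node $t=(2k-1)/(2N)$ that cloning inserts inside each interval $[(k-1)/N,k/N]$: there $L_{Y'}$ takes the value $2\sum_{i=1}^{k-1}Y_{r_i}+Y_{r_k}$, which I would check is exactly the value attained at that point by the straight segment of $2L_Y$ joining $(\tfrac{k-1}{N},2\sum_{i=1}^{k-1}Y_{r_i})$ to $(\tfrac{k}{N},2\sum_{i=1}^{k}Y_{r_i})$. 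Since two consecutive clones contribute equal increments $Y_{r_k}$, the two half-segments of $L_{Y'}$ on this interval share the same slope and are collinear, so the inserted node is spurious and $L_{Y'}$ coincides with $2L_Y$ throughout. Running the same computation on partial sums taken from the largest entries yields $L^c_{Y'}=2L^c_Y$.

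Finally I would substitute these identities into the definition of $S_p$. Because $|L^c_{Y'}(t)-L_{Y'}(t)|^p=2^p\,|L^c_Y(t)-L_Y(t)|^p$, the integral picks up a factor $2^p$, and taking the $p$-th root produces a single factor of $2$. This factor cancels exactly against the doubled normalizing constant, since $S_p(Y')$ carries the prefactor $\tfrac{1}{2N\bar Y}$ while $S_p(Y)$ carries $\tfrac{1}{N\bar Y}$; hence $S_p(Y')=S_p(Y)$.

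The main obstacle is the second step: establishing that cloning rescales the \emph{entire} Lorenz curve by the factor $2$, rather than merely matching it at the coarse nodes. Everything hinges on the collinearity check for the inserted midpoint node, which is precisely where the repetition of equal values is used in an essential way; once that is in place, the cancellation of the factors of $2$ in the last step is purely formal.
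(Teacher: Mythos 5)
Your proof is correct and takes essentially the same route as the paper: both arguments hinge on checking that the cloned Lorenz curve agrees with (a multiple of) the original curve at every node of the refined grid, with the inserted midpoint nodes handled exactly by the linearity/collinearity observation you describe. The only difference is bookkeeping — the paper first invokes scale invariance to normalize both vectors to total mass one, so the two Lorenz curves coincide literally, whereas you carry the factor of $2$ explicitly and cancel it against the doubled prefactor $\tfrac{1}{2N\bar Y}$ at the end (and you also spell out the dual-curve identity $L^c_{Y'}=2L^c_Y$, which the paper leaves implicit).
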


\begin{proof}
    Owing to the scale invariance of $S_p$, we can assume without loss of generality that $Y$ is such that $N\bar Y=\sum_{i=1}^NY_i=1$ and $Y'=(Y/2,Y/2)$ so that $2N\bar Y'=1$.
    By definition, we have that $L_{Y'}$ is the linear interpolation of the points
    \begin{equation}
    \label{eq:points_to_interpolate}
        \Big(\frac{k}{2N},\sum_{i=1}^k\frac{Y_{r_i'}'}{2}\Big),
    \end{equation}
    for every $k=1,2,\dots,2N$.
    First, we notice that if $k$ is even, so that $k=2\ell$, it holds
    \[
        L_Y(\ell/N)=\sum_{i=1}^\ell Y_{r_i}=\sum_{i=1}^{2\ell}\frac{Y_{r_i'}'}{2}=L_{Y'}(\ell/N).
    \]
    To conclude, it suffices to show that $L_{Y}\Big(\frac{\ell}{N}+\frac{1}{2N}\Big)=L_{Y'}\Big(\frac{2\ell+1}{N}\Big)$, we would then be able to conclude that $L_Y(t)=L_{Y'}(t)$ as they are both piecewise linear interpolations of the set of points in \eqref{eq:points_to_interpolate}.
    Owing to the linearity of $L_Y$, we have that
    \begin{equation*}
        L_{Y}\Big(\frac{\ell}{N}+\frac{1}{2N}\Big)=\sum_{i=1}^\ell Y_{r_i}+\frac{Y_{r_{i+1}}}{2}=\sum_{i=1}^{2\ell} \frac{Y_{r_i'}'}{2} +\frac{Y_{r_{2\ell+1}'}'}{2}=\sum_{i=1}^{2\ell+1} \frac{Y_{r_i'}'}{2}=L_{Y'}\Big(\frac{2\ell+1}{N}\Big),
    \end{equation*}
    we thus conclude the thesis.
\end{proof}

\subsubsection{The Robin Hood Property}

Lastly, we check that the index $S_p$ possesses the Robin Hood property, which states that the variability decreases when we transfer some of the quantity represented by $Y$ from a large entry to a small entry.
More formally, the Robin Hood property states that given a positive vector $Y$ and two indexes $i$ and $j$ such that
\[
    Y_i<Y_j
\]
there exists an $\bar \varepsilon>0$ such that if we increase $Y_i$ by $0\le \varepsilon\le \bar \varepsilon$ and decrease $Y_j$ by the same quantity $\varepsilon$, the total variability measured by $S_p$ decreases.
In other words, if we define
\begin{equation}
\label{eq:Y_epsi}
    Y_\varepsilon=\begin{cases}
        (Y_\varepsilon)_i=Y_i+\varepsilon\\
        (Y_\varepsilon)_j=Y_j-\varepsilon\\
        (Y_\varepsilon)_k = Y_k\quad\quad\text{otherwise}\\
    \end{cases}
\end{equation}
we have $S_p(Y_\varepsilon)\le S_p(Y)$ for $\varepsilon$ small enough.

\begin{proposition}
    The index $S_p$ has the Robin Hood property.
\end{proposition}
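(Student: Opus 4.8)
The plan is to exploit the fact that a Robin Hood transfer preserves the total mass, so that the normalizing factor in $S_p$ is untouched and the whole problem collapses to a pointwise comparison of Lorenz gaps. Concretely, since $(Y_\varepsilon)_i+(Y_\varepsilon)_j=Y_i+Y_j$ and all other coordinates are unchanged, we have $N\bar Y_\varepsilon=\sum_k(Y_\varepsilon)_k=\sum_kY_k=N\bar Y$. Hence it suffices to show that for $\varepsilon$ small enough
\[
    \int_{0}^1\big|L^c_{Y_\varepsilon}(t)-L_{Y_\varepsilon}(t)\big|^p\,dt\le\int_{0}^1\big|L^c_{Y}(t)-L_{Y}(t)\big|^p\,dt .
\]
I would obtain this from the \emph{pointwise} inequalities $L_{Y_\varepsilon}(t)\ge L_Y(t)$ and $L^c_{Y_\varepsilon}(t)\le L^c_Y(t)$ on $[0,1]$: combined with the fact that $L^c-L\ge0$ (the extremal case of \eqref{eq:inequalities_linear_Lorentz}), these give $0\le L^c_{Y_\varepsilon}(t)-L_{Y_\varepsilon}(t)\le L^c_Y(t)-L_Y(t)$, so that raising to the power $p>0$ and integrating yields the desired inequality between the integrals, and therefore $S_p(Y_\varepsilon)\le S_p(Y)$.

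The core of the argument is thus the pointwise Lorenz domination, which I would reduce to the partial-sum criterion \eqref{eq:relationLXLY} already established for vectors with equal total mass. First I would fix $\bar\varepsilon>0$ small enough that the transfer is rank-preserving, i.e. so that $0\le\varepsilon\le\bar\varepsilon$ leaves the increasing reordering of every coordinate unchanged (this requires $\varepsilon$ below the smallest gap separating $Y_i$ from the next distinct value above it, $Y_j$ from the next distinct value below it, and in particular $Y_i+\varepsilon\le Y_j-\varepsilon$). Let $a$ and $b$ denote the positions of indices $i$ and $j$ in the sorted order; since $Y_i<Y_j$ we have $a<b$. Writing $r'$ for the (unchanged) ordering induced by $Y_\varepsilon$, the effect of the transfer on the cumulative sums is transparent: for $k<a$ the partial sum $\sum_{\ell=1}^k(Y_\varepsilon)_{r'_\ell}$ is unchanged, for $a\le k<b$ it is increased by exactly $\varepsilon$, and for $k\ge b$ the $+\varepsilon$ and $-\varepsilon$ cancel so it is again unchanged. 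Hence $\sum_{\ell=1}^k(Y_\varepsilon)_{r'_\ell}\ge\sum_{\ell=1}^kY_{r_\ell}$ for every $k$.

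By the equal-mass equivalence \eqref{eq:relationLXLY} applied with $X=Y_\varepsilon$, this partial-sum domination gives $L_{Y_\varepsilon}(t)\ge L_Y(t)$ for all $t\in[0,1]$, and by the remark immediately following \eqref{eq:relationLXLY} it simultaneously gives $L^c_{Y_\varepsilon}(t)\le L^c_Y(t)$; alternatively the dual bound follows from the symmetric top-$k$ sum computation (the decreased coordinate $Y_j-\varepsilon$ enters the largest-$k$ sums before the increased coordinate $Y_i+\varepsilon$, so those sums can only drop). Plugging these into the reduction of the first paragraph closes the proof. I expect the only delicate point to be the choice of $\bar\varepsilon$: the clean position-based bookkeeping of the partial sums is valid only while the ordering is frozen, so I would state explicitly how $\bar\varepsilon$ is chosen to guarantee rank preservation (and note that ties may be broken arbitrarily, since the cumulative sums, and hence $L$ and $L^c$, do not depend on the tie-breaking rule). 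Everything else is a routine monotonicity-and-integration argument.
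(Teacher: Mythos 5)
Your proposal is correct and follows essentially the same route as the paper's proof: freeze the ordering for $\varepsilon$ small, observe that the transfer leaves the total mass (hence the normalizing factor) unchanged, show the partial-sum domination by the same three-case bookkeeping ($k$ before position of $i$, between the positions of $i$ and $j$, after position of $j$), and conclude via \eqref{eq:relationLXLY} and the remark following it that $0\le L^c_{Y_\varepsilon}-L_{Y_\varepsilon}\le L^c_Y-L_Y$ pointwise, so raising to the power $p$ and integrating gives $S_p(Y_\varepsilon)\le S_p(Y)$. If anything, your choice of $\bar\varepsilon$ is slightly more careful than the paper's (which sets $\bar\varepsilon=\min_{Y_l>Y_i}Y_l-Y_i$ and does not explicitly guard against $Y_j-\varepsilon$ crossing values below it), but this is a refinement of the same argument, not a different one.
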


\begin{proof}
    Given a positive vector $Y$, let us consider $i$ and $j$ for which it holds $Y_i<Y_j$.
    Without loss of generality, assume that $Y$ is such that $Y_j\neq Y_i$ as long as $i\neq j$.\footnote{The case in which $Y_j=Y_i$ for $i\neq j$ follows by the same argument and by choosing a suitable way to break ties while determining the ordering of the values in $Y$.}
    Then, we define $\bar \varepsilon := \min_{Y_l>Y_i}Y_l-Y_i$.
    By definition of $\bar \varepsilon$, we have that the ordering of $Y_\varepsilon$ and $Y$ have the same ordering, which we denote by $r_i$.
    Moreover, we notice that for every $\varepsilon$ it holds $\bar Y_\varepsilon=\bar Y$.
    To conclude, it suffices to show that $L_Y(t)\le L_{Y_\varepsilon}(t)$ for every $t\in[0,1]$ or, equivalently, that 
    \begin{equation}
    \label{eq:sums_RH}
        \sum_{i=1}^kY_{r_i}\le \sum_{i=1}^k (Y_{\epsilon})_{r_i}
    \end{equation}
    for every $k=1,\dots,N$.
    Let us now introduce the indexes $i'$ and $j'$ which are defined as the indexes for which it holds $r_{i'}=i$ and $r_{j'}=j$.
    and
    Since $Y_i<Y_j$, we have that $i'<j'$.
    Then, we observe that \eqref{eq:sums_RH} holds as an equality whenever $k<i'$ and $k\ge j'$, while for $i'\le k<j'$ we have
    \begin{equation}
        \sum_{i=1}^k (Y_{\epsilon})_{r_i}=\varepsilon+\sum_{i=1}^kY_{r_i}\ge \sum_{i=1}^kY_{r_i}, 
    \end{equation}
    hence the thesis.
\end{proof}

\subsubsection{The Bill Gates and the Babies Properties}
Finally, we consider the Bill Gates and the Babies properties.
Roughly speaking, the Bill Gates property states that if we increase one of the entries of $Y$ indefinetively, the variability will increase.
This property describes the fact that if one person accrues a lot of a quantity, the final distribution of that quantity will be more uneven.
The Babies properties describes a similar effect described by the Bill Gates property, as it states that if we add null entries to a vector, then its variability increases.
Since we have already proven that $S_p$ possesses the scale invariancy, the rising tide, the cloning, and the Robin Hood properties, these two properties are necessarily possessed by $S_p$ (see Ref. \cite{Hurley2009SparsityMeasures}).

\begin{remark}
    Lastly, we observe that if we take the limit of $S_p$ for $p$ that goes to $+\infty$, we have
    \begin{equation}
        \label{eq:S_infty}
        S_{\infty}(Y):=\lim_{p\to\infty}S_p(Y)=\frac{1}{N\bar Y}\sup_{t\in[0,1]}|L_Y^c(t)-L_Y^c(t)|.
    \end{equation}
    By the definition of $L_Y$ and $L_Y^c$, we have that the supremum in \eqref{eq:S_infty} has an explicit form.
    Indeed, it is easy to see that the function $k\to \sum_{i=1}^k(Y_{r_{N+1-i}}-Y_{r_i})$ is increasing when $k\le \frac{N}{2}$ and decreasing otherwise; we have that 
    \[
        \sup_{t\in[0,1]}|L_Y^c(t)-L_Y^c(t)| = |L_Y^c(1/2)-L_Y^c(1/2)|,
    \]
    so that, when $N$ is even,
    \[
        S_\infty(Y)=\frac{1}{N\bar Y}\sup_{t\in[0,1]}|L_Y^c(t)-L_Y^c(t)|=\frac{\sum_{i=1}^{N/2}Y_{r_{N+1-r_i}}-\sum_{i=1}^{N/2}Y_{r_i}}{N\bar Y}.
    \]
    To conclude, we notice that $S_\infty$ is closely related with the Pietra index\cite{Pietra1915RelazioniIndici} which is defined as
    \[
        P(Y):=\sup_{t\in [0,1]}|t-L_Y(t)|.
    \]
    Indeed, owing to the fact that $L_Y(t)\le t \le L_Y^c(t)$ for every $t\in[0,1]$, it is easy to see that $P(Y)\le S_\infty (Y) \le 2 P(Y)$.
\end{remark}

\subsection{The Properties of the $\mathrm{RGX}_p$ metric} 
Now that we have established that every $\mathrm{RGX}_p$ can be interpreted as the percentage of variability induced by the prediction $Z$ over the real value $Y$, we study the properties of any $\mathrm{RGX}_p$ metric.
First, as we have already noticed, we have that $\mathrm{RGX}_p(Y,Z)\in[0,1]$.
Moreover, we have that $\mathrm{RGX}_p(Y,Z)=1$ if and only if the ordering induced by $Y$ and the ordering induced by $Z$ are the same.
Likewise, $\mathrm{RGX}_p(Y,Z)=0$ if and only if the ordering induced by $-Y$ and the ordering induced by $Z$ are the same.
Unlike the Cramér-Von Mises, the $\mathrm{RGX}_p(Y,Z)=0$ is invariant under any change of positive scale.

\begin{proposition}
    Given any couple of positive vectors $Y$ and $Z$, we have that $\mathrm{RGX}_p(Y,Z)=\mathrm{RGX}_p(\gamma Y,\gamma Z)$ for any $\gamma>0$.
\end{proposition}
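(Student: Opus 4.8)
The plan is to exploit two elementary facts: first, that multiplying every entry of a vector by a positive constant does not alter the ordering it induces, and second, that each of the three piecewise-linear curves appearing in \eqref{eq:RGX_p} is built by accumulating entries of $Y$, so that all three scale by exactly $\gamma$ when $Y$ is rescaled. Once both facts are in place, a common factor $\gamma^p$ will appear in the numerator and the denominator of the ratio defining $\mathrm{RGX}_p$, and it will cancel.

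First I would record that, since $\gamma>0$, the increasing reordering $r_i$ of $\gamma Y$ coincides with that of $Y$, and likewise the ordering $r^*_i$ induced by $\gamma Z$ coincides with the one induced by $Z$. Consequently the three families of interpolation nodes transform in a controlled way: the Lorenz nodes of $\gamma Y$ are $\big(k/N,\sum_{i=1}^k\gamma Y_{r_i}\big)=\big(k/N,\gamma\sum_{i=1}^k Y_{r_i}\big)$, the dual Lorenz nodes scale identically, and---this is the one point meriting attention---the concordance nodes of the pair $(\gamma Y,\gamma Z)$ are $\big(k/N,\sum_{i=1}^k\gamma Y_{r^*_i}\big)$, where the indices $r^*_i$ come from $Z$ (hence are unchanged) while the summed heights come from $Y$ (hence are scaled by $\gamma$). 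By uniqueness of the piecewise-linear interpolant through scaled nodes one obtains
\begin{equation}
    L_{\gamma Y}(t)=\gamma\,L_Y(t),\qquad L^c_{\gamma Y}(t)=\gamma\,L^c_Y(t),\qquad L_{\gamma Y,\gamma Z}(t)=\gamma\,L_{Y,Z}(t)
\end{equation}
for every $t\in[0,1]$.

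To finish I would substitute these three identities into \eqref{eq:RGX_p}. The integrand of the numerator becomes $|L_{\gamma Y,\gamma Z}(t)-L_{\gamma Y}(t)|^p=\gamma^p|L_{Y,Z}(t)-L_Y(t)|^p$, and the integrand of the denominator becomes $\gamma^p|L^c_Y(t)-L_Y(t)|^p$; pulling the common factor $\gamma^p$ out of both integrals and cancelling yields $\mathrm{RGX}_p(\gamma Y,\gamma Z)=\mathrm{RGX}_p(Y,Z)$. I do not anticipate a genuine obstacle: the only delicate step is verifying that the concordance curve scales correctly, since its ordering is dictated by $Z$ while its heights are accumulated from $Y$. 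Confirming that rescaling $Z$ affects only the (scale-invariant) ordering, and never the heights, is precisely what makes the argument go through; indeed the same reasoning shows the slightly stronger statement that $Y$ and $Z$ may be rescaled by independent positive constants without changing $\mathrm{RGX}_p$.
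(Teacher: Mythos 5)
Your proof is correct and follows essentially the same route as the paper's: both observe that positive rescaling preserves the orderings induced by $Y$ and $Z$, that all three interpolating curves scale by $\gamma$, and that the resulting factor $\gamma^p$ cancels between numerator and denominator of \eqref{eq:RGX_p}. The only cosmetic difference is that the paper invokes the previously established scale invariance of $S_p$ for the denominator rather than re-deriving it, and your closing remark that $Y$ and $Z$ may be rescaled by \emph{independent} positive constants is a valid strengthening consistent with the paper's subsequent proposition on monotone transformations of $Z$.
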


\begin{proof}
    First, we notice that both $\gamma Y$ and $\gamma Z$ induce the same ordering as $Y$ and $Z$.
    Second, we have that
    \begin{equation}
        \label{eq:RGX_scale_invariance}
        \mathrm{RGX}_p(\gamma Y,\gamma Z)=1-\frac{\frac{1}{(\gamma N\bar Y)^p}\int_{0}^1|L_{\gamma Y,\gamma Z}(t)-L_{\gamma Y}(t)|^pdt}{\frac{1}{(\gamma N\bar Y)^p}\int_{0}^1|L_{\gamma Y}^c(t)-L_{\gamma Y}(t)|^pdt}.
    \end{equation}
    Since the denominator of \eqref{eq:RGX_scale_invariance} is scale invariant, it suffices to show that the numerator of \eqref{eq:RGX_scale_invariance} is scale invariant.
    To conclude, we then notice that $|L_{\gamma Y,\gamma Z}(t)-L_{\gamma Y}(t)|^p=\gamma^p |L_{Y,Z}(t)-L_{Y}(t)|^p$, which concludes the thesis.
\end{proof}

Moreover, since the value computed by $\mathrm{RGX}_p(Y,Z)$ depends on the vector $Z$ only by the ordering induced by the vector $Z$, we have that any monotone increasing transformation of the vector $Z$ does not affect the value of the metric $\mathrm{RGX}_p$.

\begin{proposition}
    Let $f:\erre\to\erre$ be a monotone increasing function.
    We then have that
    \begin{equation}
        RGX(Y,Z)=RGX(Y,f(Z)),
    \end{equation}
    where $f(Z):=(f(Z_1),f(Z_2),\dots,f(Z_N))$.
    If $f$ is monotone decreasing, it holds
    \begin{equation}
        RGX(Y,f(Z))=1-RGX(Y,Z).
    \end{equation}
\end{proposition}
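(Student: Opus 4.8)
The plan is to exploit the structural fact, already stressed in this subsection, that $\mathrm{RGX}_p(Y,Z)$ depends on $Z$ only through the ordering $r^*$ that $Z$ induces on the entries of $Y$, since the denominator $\int_0^1|L_Y^c(t)-L_Y(t)|^p\,dt$ is a function of $Y$ alone. First I would dispatch the increasing case. If $f$ is monotone increasing, then $Z_{r^*_1}\le\dots\le Z_{r^*_N}$ forces $f(Z_{r^*_1})\le\dots\le f(Z_{r^*_N})$, so the ordering induced by $f(Z)$ is again $r^*$. Hence the concordance curve is unchanged, $L_{Y,f(Z)}=L_{Y,Z}$, the numerators coincide, and since the denominator depends on $Y$ only we get $\mathrm{RGX}(Y,f(Z))=\mathrm{RGX}(Y,Z)$ for every $p$.

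For the decreasing case the key step is a pair of reflection identities. When $f$ is monotone decreasing the induced ordering is reversed, $\hat r_i=r^*_{N+1-i}$, so $L_{Y,f(Z)}(k/N)=\sum_{i=1}^k Y_{r^*_{N+1-i}}=\sum_{j=N+1-k}^N Y_{r^*_j}=N\bar Y-L_{Y,Z}((N-k)/N)$; by piecewise linearity this gives $L_{Y,f(Z)}(t)=N\bar Y-L_{Y,Z}(1-t)$. The identical bookkeeping applied to the increasing ordering of $Y$ itself yields $L_Y^c(t)=N\bar Y-L_Y(1-t)$ straight from the definition of the dual Lorenz curve. I would then change variables $s=1-t$ in the numerator of $1-\mathrm{RGX}(Y,f(Z))$: the two additive constants $N\bar Y$ cancel, the integrand collapses to $|L_Y^c(s)-L_{Y,Z}(s)|^p$, and therefore $\int_0^1|L_{Y,f(Z)}(t)-L_Y(t)|^p\,dt=\int_0^1|L_Y^c(s)-L_{Y,Z}(s)|^p\,ds$.

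To finish I would invoke the ordering inequalities $L_Y(t)\le L_{Y,Z}(t)\le L_Y^c(t)$ from \eqref{eq:inequalities_linear_Lorentz}, which remove the absolute values and permit the splitting $L_Y^c-L_{Y,Z}=(L_Y^c-L_Y)-(L_{Y,Z}-L_Y)$. For $p=1$ this integrates additively, giving $\int_0^1(L_Y^c-L_{Y,Z})\,ds=\int_0^1(L_Y^c-L_Y)\,ds-\int_0^1(L_{Y,Z}-L_Y)\,ds$; dividing by the common denominator produces $1-\mathrm{RGX}(Y,f(Z))=1-\bigl(1-\mathrm{RGX}(Y,Z)\bigr)$, i.e. the claimed $\mathrm{RGX}(Y,f(Z))=1-\mathrm{RGX}(Y,Z)$.

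The hard part will be precisely this last additive splitting, and I expect it to mark the real boundary of the statement. The reflection identities and the change of variables hold verbatim for every $p$, but the step $|L_Y^c-L_{Y,Z}|^p=|L_Y^c-L_Y|^p-|L_{Y,Z}-L_Y|^p$ is false in general, since $(a-b)^p$ does not separate into $a^p$ and $b^p$ for $p\neq 1$. Consequently I would present the increasing-case equality for the whole family $\mathrm{RGX}_p$, while flagging that the exact complementarity $1-\mathrm{RGX}$ in the decreasing case is special to the $p=1$ metric $\mathrm{RGX}$; for $p\neq 1$ one only obtains $1-\mathrm{RGX}_p(Y,f(Z))=\int_0^1|L_Y^c-L_{Y,Z}|^p\,dt\big/\int_0^1|L_Y^c-L_Y|^p\,dt$, which does not reduce to $1-\mathrm{RGX}_p(Y,Z)$.
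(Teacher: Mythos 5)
Your proof is correct, and in the decreasing case it takes a genuinely different route from the paper's. The increasing case is identical in both: $\mathrm{RGX}_p$ depends on $Z$ only through the induced ordering, which is preserved by increasing $f$. For decreasing $f$, the paper works entirely at the discrete level: writing $s^*_i=r^*_{N+1-i}$, it manipulates the double sums $\sum_{k=1}^N\sum_{i=1}^k(Y_{s^*_i}-Y_{r_i})$ by telescoping and by using $\sum_{i=1}^N(Y_{r^*_i}-Y_{r_i})=0$, concluding that the numerator for $f(Z)$ equals the denominator minus the numerator for $Z$. You instead prove the curve-level reflection identities $L_{Y,f(Z)}(t)=N\bar Y-L_{Y,Z}(1-t)$ and $L_Y^c(t)=N\bar Y-L_Y(1-t)$, change variables $s=1-t$, and then split $L_Y^c-L_{Y,Z}=(L_Y^c-L_Y)-(L_{Y,Z}-L_Y)$, which uses linearity of the integral, i.e. $p=1$. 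The two proofs rest on the same underlying fact (reversing the ordering produces the dual of the concordance curve relative to total mass $N\bar Y$), but yours is cleaner: the paper's displayed identity contains notational slips (as written, the second term of its first line vanishes identically since $Y_{s^*_i}=Y_{r^*_{N+1-i}}$), whereas your reflection-plus-substitution argument is airtight. More importantly, your approach isolates exactly where $p=1$ enters: the reflection and the change of variables hold for every $p$, while the additive splitting fails for $p\neq1$ (for $p=2$ it would force $\int_0^1(L_{Y,Z}-L_Y)(L_Y^c-L_{Y,Z})\,dt=0$, which is generically false). This correctly delimits the scope of the decreasing-case complementarity to $\mathrm{RGX}=\mathrm{RGX}_1$, a restriction the paper leaves implicit by stating the proposition for $\mathrm{RGX}$ inside the section on general $\mathrm{RGX}_p$.
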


\begin{proof}
    Let us consider $Z$ a vector and $f$ a monotone increasing function.
    By definition, we have that the ordering of $Y^*$ is the same one as per $f(Z)=(f(Z_1),f(Z_2),\dots,f(Z_N))$.
    Since $Z$ affects the definition of $RGX$ only by determining the reordering $r^*_i$, we conclude that $RGX(Y,Z)=RGX(Y,f(Z))$.
    Let us now consider a monotone decreasing function $f$.
    We then have that the ordering of $f(Z)$, namely $s^*_i$ is the reverse ordering of $r^*_i$, that is $s^*_i=r^*_{N+1-i}$.
    Lastly, notice that
    \begin{align*}
        \sum_{k=1}^N\bigg(\sum_{i=1}^k (Y_{s^*_i}-Y_{r_i})\bigg) &= \sum_{k=1}^N\bigg(\sum_{i=1}^k (Y_{r^*_{N+1-i}}-Y_{r_i})\bigg)-\sum_{k=1}^N\bigg(\sum_{i=1}^k (Y_{r^*_{N+1-i}}-Y_{s^*_i})\bigg)\\
        &=\sum_{k=1}^N\bigg(\sum_{i=1}^k (Y_{r^*_{N+1-i}}-Y_{r_i})\bigg)-\sum_{k=1}^N\bigg(\sum_{i=1}^k (Y_{r^*_{i}}-Y_{r_i})\bigg),
    \end{align*}
    which allows us to conclude that
    \begin{equation*}
        RGX(Y,f(Z))=1-RGX(Y,Z).
    \end{equation*}
\end{proof}

It is then easy to see that the RGX metric, intended as a function of $Y$ and $Z$, has different properties than the Cramér-Von Mises.
Regardless, in the next section, we show that it is possible to frame any $\mathrm{RGX}_p$ metric as a $p$ Cramér-Von Mises divergence by associating to the couple of the vectors $Y$ and $Z$ a suitable couple of elements of $\PP(\erre)$.

\begin{remark}
    To conclude, we show that, depending on the input of the $\mathrm{RGX}_p$ metric it is possible to capture different properties of an AI model:

\begin{itemize}
    \item \textbf{Accuracy ($\mathrm{RGA}$):} Set $Y$ as the ground truth and $Z=\hat{Y}$ as the model prediction. 
    Then we denote the $\mathrm{RGX}$ by $\mathrm{RGA}$, where $A$ stands for accuracy, and use it to measure the overall accuracy of the AI model, $\hat{Y}$.
    \item \textbf{Robustness ($\mathrm{RGR}$):} Set $Y=\hat{Y}$ as the prediction on the original data and $Z=\hat{Y}^{(p)}$ as the prediction on perturbed data. 
    Then we denote the $\mathrm{RGX}$ by $\mathrm{RGR}$, where $A$ stands for robustness, and use it to measure the overall robustness of the AI model, $\hat{Y}$.
    \item \textbf{Explainability ($\mathrm{RGE}$):} Set $Y=\hat{Y}$ as the prediction on the full dataset and $Z=\hat{Y}^{(-j)}$ as the prediction obtained after removing the $j$-th feature(s). 
    Then we denote the $\mathrm{RGX}$ by $\mathrm{RGE}$, where $E$ stands for explainability, and use it to assess the explainability of the $j$-th variable(s), with respect to the model.
    $\hat{Y}$.
\end{itemize}

\end{remark}

\subsection{Re-phrasing the $\mathrm{RGX}_p$ as a $p$ Cramér-Von Mises divergence}

Let us now consider two vectors $Y=(Y_1,\dots,Y_N)$ and $Z=(Z_1,\dots,Z_N)$.
In what follows, we denote by $\Delta_{\frac{i}{N}}$ absolutely continuous probability measure whose density is defined as
\begin{equation}
    \label{eq:Delta_i_N}
    {\Delta_{\frac{i}{N}}}(x):=N\mathbb{I}_{[\frac{i-1}{N},\frac{i}{N}]}(x),
\end{equation}
where $\mathbb{I}_A$ is the indicator function of the set $A$.
Following our notation, let us denote by $r_i$ the real ordering of $Y$ and $r^*_i$ the real ordering of $Z$.
We then introduce the following two probability measures
\begin{equation}
\label{eq:def_mu_mustar}
    \mu=\sum_{i=1}^N\frac{Y_{r_i}}{N\bar Y}\Delta_{\frac{i}{N}}\quad\quad\quad\quad\text{and}\quad\quad\quad\quad\mu^*=\sum_{i=1}^N\frac{Y_{r^*_i}}{N\bar Y}\Delta_{\frac{i}{N}},
\end{equation}
where $\bar Y$ is the average of the entries of $Y$, so that $N\bar Y=\sum_{i=1}^NY_i$.
It is immediate to check that the cumulative distribution function of $\mu$ is $L_Y$, while the cumulative distribution function associated with $\mu^*$ is $L_{Y,Z}$.
We can then express the numerator in the definition of the $\mathrm{RGX}_p$ metric as the $L_p$ norm of the difference between the cumulative distribution functions induced by $\mu^*$ and $\mu$.
In particular,
\begin{equation}
\label{eq:RGX_tosemiCVM}
    \mathrm{RGX}_p(Y,Z)=1-\frac{\frac{1}{(N\bar Y)^p}\int_{0}^1|L_{Y,Z}(t)-L_{\gamma Y}(t)|^pdt}{S^p_p(Y)},
\end{equation}
where we have used the fact that $L_{\gamma Y,\gamma Z}(t)-L_{\gamma Y}(t)\ge 0$ for every $t\in[0,1]$.
Notice that the right hand side of Equation \eqref{eq:RGX_tosemiCVM} resembles a discrete version of the Cramér-von Mises divergence, with the only difference being that measure used to integrate the difference between the two cumulative distribution functions is determined by the uniform distribution rather than with respect to $\mu$.
We therefore introduce the following weighted $\mathrm{RGX}$ metric
\begin{equation}
    \mathrm{WRGX}_{p}(Y,Z)=1-\frac{\int_{0}^1|L_{Y,Z}(t)-L_{Y}(t)|^pd\mu(t)}{\int_{0}^1|L^c_{Y}(t)-L_{Y}(t)|^pd\mu(t)}.
\end{equation}

The modified $\mathrm{WRGX}_{p}$ metric is therefore a weighted $\mathrm{RGX}_p$ metric in which the $L_p$ norm of each segment $[\frac{i-1}{N},\frac{i}{N}]$ is weighted by $\frac{Y_{r_i}}{\sum_{i=1}Y_i}$ rather than by a uniform value $\frac{1}{N}$.
By using the weights induced by the vector $Y$ the metric gives more importance to entries associated with larger values of $Y$.

By recalling the definition of the $p$-th order Cramér-von Mises divergence, we infer the following result.

\begin{proposition}
    Given any two vectors $Y$ and $Y^*$, we have that
    \begin{equation}
        \mathrm{WRGX}_{p}(Y,Z)=1-\frac{\CvM_p(\mu,\mu^*)}{\int_{0}^1|L^c_{Y}(t)-L_{Y}(t)|^pd\mu(t)}.
    \end{equation}
\end{proposition}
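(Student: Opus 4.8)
The plan is to recognize that the numerator in the definition of $\mathrm{WRGX}_p$ is, verbatim, the $p$-th order Cramér--von Mises divergence between $\mu$ and $\mu^*$, so that the statement reduces to a single substitution. There is no genuine difficulty here; the proof is a definition-chase, and the only point demanding care is the measure-theoretic identification of the integrator.

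First I would recall the observation made immediately after \eqref{eq:def_mu_mustar}: the cumulative distribution function of $\mu$ is $L_Y$, and the cumulative distribution function of $\mu^*$ is $L_{Y,Z}$. Since each $\Delta_{\frac{i}{N}}$ is supported on $[\frac{i-1}{N},\frac{i}{N}]\subseteq[0,1]$, both $\mu$ and $\mu^*$ are supported on $[0,1]$, so the integration over $\erre$ in the definition of $\CvM_p$ collapses to an integration over the unit interval.

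Next I would unfold the definition of the $p$-th order Cramér--von Mises divergence with $F_X=F_\mu=L_Y$ and $F_Y=F_{\mu^*}=L_{Y,Z}$. The key point is that integrating against the differential $dF_\mu$ of the c.d.f. of $\mu$ is the same as integrating against the measure $\mu$ itself, i.e. $dF_\mu(t)=d\mu(t)$; this is exactly what it means for $L_Y$ to be the c.d.f. of $\mu$. Hence
\[
    \CvM_p(\mu,\mu^*)=\int_0^1|L_Y(t)-L_{Y,Z}(t)|^p\,d\mu(t),
\]
and by the symmetry $|a-b|^p=|b-a|^p$ the right-hand side equals the numerator $\int_0^1|L_{Y,Z}(t)-L_Y(t)|^p\,d\mu(t)$ appearing in $\mathrm{WRGX}_p(Y,Z)$.

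Substituting this identity into the definition of $\mathrm{WRGX}_p(Y,Z)$ gives the asserted formula, the denominator being left untouched. The main (and only) obstacle worth flagging is the justification $dF_\mu=d\mu$ together with the reduction of the domain of integration from $\erre$ to $[0,1]$; both follow at once from the fact that $L_Y$ is the cumulative distribution function of the compactly supported measure $\mu$.
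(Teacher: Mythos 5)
Your proof is correct and follows exactly the paper's own route: the paper's proof is precisely the identity $\CvM_p(\mu,\mu^*)=\int_{\erre}|F_\mu(t)-F_{\mu^*}(t)|^p\,dF_\mu(t)=\int_0^1|L_{Y,Z}(t)-L_Y(t)|^p\,d\mu(t)$, which you derive with somewhat more care (support reduction to $[0,1]$, $dF_\mu = d\mu$, and symmetry of $|a-b|^p$). No gaps; your version simply makes explicit the steps the paper leaves implicit.
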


\begin{proof}
    It follows from the identity 
    \[
        \CvM_p(\mu,\mu^*)=\int_{\erre}|F_\mu(t)-F_{\mu^*}(t)|^pdF_\mu(t)=\int_{0}^1|L_{Y,Z}(t)-L_Y(t)|^pd\mu(t).
    \]
\end{proof}

\subsection{Multivariate Extension}

To conclude, we propose a multivariate extension of the $\mathrm{RGX}$ metric.
Unfortunately, there is no standard way to extend the Lorentz Curve to the multivariate case.
To overcome this issue, we take the same approach that has been recently proposed to extend the Gini Index to the multivariate case.\cite{Auricchio2025WhiteningGini}
Specifically, let $Y^*$ denote a whitened random vector. The multivariate Gini Index $G_{\mathrm{multi}}$ is defined as  
\begin{equation}
    G_{\mathrm{multi}}(Y)=\sum_i \lambda_i\, G(Y^*_i),
    \label{multi}
\end{equation}
where $G$ denotes the univariate Gini Index, $Y^*_i$ is the $i$-th coordinate of $Y^*$, and $\lambda_i>0$ are defined as
\begin{equation}
\label{def:lambdas}
    \lambda_i = \frac{|m_i^*|}{\sum_j |m_j^*|},
\end{equation}
where $m_i^*$ is the mean of the $i$-th coordinate of $Y^*$.
Notice that $\sum_i \lambda_i = 1$, so that $G_{\mathrm{multi}}$ is a convex combination of the one dimensional Gini Indexes of the components of the whitened vector.
We then consider the following multivariate extension of the $\mathrm{RGX}$ coefficient.

\begin{definition}
    Let $Y$ and $Z$ be two random vectors, and let $W$ be a whitening matrix for $Y$ such that $Y^* = W Y$ has uncorrelated components.  
    We define the multivariate $\mathrm{RGX}$ metric as
    \begin{equation}
        \mathrm{RGX}_p(Y,Z)=\sum_{i=1}^d \lambda_i \mathrm{RGX}_p(Y^*_i,Z^*_i),
    \end{equation}
    where:
    \begin{enumerate*}[label=(\roman*)]
        \item $Y^*$ is the whitened version of $Y$, and $Z^*$ is obtained by applying the same whitening matrix $W$ to $Z$ and
        \item $\lambda_i$ are defined as in \eqref{def:lambdas}.
    \end{enumerate*}
\end{definition}

First, note that $0\le \mathrm{RGX}_p(Y,Z) \le 1$.  
Moreover, since we are considering a scale stable whitening, $\mathrm{RGX}_p$ metric retains the scale invariancy property.

\begin{theorem}
    The metrics $\mathrm{RGA}$, $\mathrm{RGR}$, and $\mathrm{RGE}$ are scale-invariant.
\end{theorem}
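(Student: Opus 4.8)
The plan is to observe that, by the remark preceding this statement, $\mathrm{RGA}$, $\mathrm{RGR}$, and $\mathrm{RGE}$ are simply the multivariate $\mathrm{RGX}_p$ metric evaluated at specific pairs of random vectors $(Y,Z)$ living in a common $d$-dimensional space. Hence it suffices to prove that the multivariate $\mathrm{RGX}_p$ metric is invariant under the simultaneous rescaling $Y\mapsto QY$, $Z\mapsto QZ$, where $Q=\diag(q_1,\dots,q_d)$ is any diagonal matrix with strictly positive entries. This is the natural multivariate analogue of the univariate scale invariance already established, and it is the relevant form of scaling in all three applications, since in each case $Y$ and $Z$ are quantities measured on the same coordinate axes (ground truth versus prediction, original versus perturbed prediction, full versus reduced prediction).

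First I would exploit the Scale Stability of the chosen whitening process (e.g.\ Cholesky or ZCA-cor). By definition, a scale-stable whitening satisfies $\mathcal{S}(QY)=\mathcal{S}(Y)=Y^*$, so the whitened reference vector is literally unchanged under the rescaling. Writing $\mathcal{S}(\bX)=W_\bX\bX$, the identity $W_{QY}(QY)=W_Y Y$, valid for generic $Y$, forces the relation $W_{QY}=W_Y Q^{-1}$ between the two whitening matrices.

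Next I would track the companion vector. By the definition of the multivariate metric, $Z$ is whitened using the whitening matrix of the \emph{first} argument, which after rescaling is $W_{QY}$. Therefore the new whitened companion is $W_{QY}(QZ)=W_Y Q^{-1}QZ=W_Y Z=Z^*$, so $Z^*$ is also preserved. Since both $Y^*$ and $Z^*$ are unchanged, the coefficients $\lambda_i$—which depend only on the component means $m_i^*$ of $Y^*$—are unchanged, and each univariate term $\mathrm{RGX}_p(Y^*_i,Z^*_i)$ is unchanged. Summing over $i$ gives $\mathrm{RGX}_p(QY,QZ)=\mathrm{RGX}_p(Y,Z)$, which specializes immediately to the three named metrics.

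The main obstacle is the bookkeeping in the second step: one must be careful that it is the whitening matrix of the first argument (not one derived from $Z$) that acts on $Z$, and that the relation $W_{QY}=W_Y Q^{-1}$ coming from Scale Stability is exactly what makes the factor $Q$ cancel when it is applied to $QZ$. If the whitening were merely covariant under scaling rather than scale-stable, the whitened coordinates would only agree up to a further positive diagonal rescaling, and one would then have to invoke the univariate scale invariance of $\mathrm{RGX}_p$ coordinate by coordinate to close the argument; Scale Stability is precisely what lets us bypass this and obtain exact equality of the whitened vectors.
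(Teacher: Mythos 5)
Your proof is correct and takes essentially the same route as the paper: the paper's proof is a one-line appeal to the Scale Stability of the chosen whitening process (deferring details to the cited reference on whitening-based Gini indices), which is exactly the property your argument rests on. You simply make explicit the bookkeeping the paper leaves to that citation — namely that $\mathcal{S}(QY)=\mathcal{S}(Y)$ forces $W_{QY}=W_YQ^{-1}$, so that $Z^*=W_{QY}(QZ)=W_YZ$ is preserved along with $Y^*$, and hence the weights $\lambda_i$ and each univariate term $\mathrm{RGX}_p(Y^*_i,Z^*_i)$ are unchanged.
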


\begin{proof}
    The scale invariancy property follows from the fact that we are using a scale stable whitening process, as shown in Ref. \cite{Auricchio2025WhiteningGini}.
\end{proof}




\section{Experimental Results}

We applied SAFE metrics to evaluate the predictive performance of two alternative models - a linear regression (LM) and a feedforward neural network (NN) - across both univariate and multivariate ESG specifications. 
The goal of this analysis was to assess model \emph{accuracy}, \emph{robustness}, and \emph{explainability} in predicting the Environmental (E), Social (S), and Governance (G) scores of Small and Medium-sized Enterprises (SMEs). All experiments were conducted in \textsf{R} using a five-fold cross-validation procedure to ensure robustness and generalizability.

Each model was trained on a dataset of 1,062 SMEs, where the three ESG pillars — continuous scores between 0 (worst) and 1 (best) — are used as dependent variables. 
Predictors include financial indicators and a categorical sector variable representing five industrial groups. Each financial predictor $x$ is standardized using the z-score formula:

\begin{equation*}
x^{\text{std}}_i = \frac{x_i - \bar{x}}{\sigma_x}
\end{equation*}

where $\bar{x}$ and $\sigma_x$ are the sample mean and standard deviation of $x$, respectively.
The codes for variables are described in Table \ref{tab:variables}.

\begin{table}[ht]
\centering
\caption{Description of the variables used in the experiments.}
\label{tab:variables}
\begin{tabular}{ll}
\toprule
\textbf{Code} & \textbf{Description} \\
\midrule
\texttt{E.Sc}   & Environmental Pillar Score \\
\texttt{S.Sc}   & Social Pillar Score \\
\texttt{G.Sc}   & Governance Pillar Score \\
\texttt{sTASS}          & Standardized Total Assets \\
\texttt{sSFND}          & Standardized Shareholders' Funds \\
\texttt{sTOVR}          & Standardized Turnover \\
\texttt{sNINC}          & Standardized Net Income \\
\texttt{sEBTD}          & Standardized EBITDA \\
\texttt{Sector\_factor}/\texttt{Sector} & Sector Classification \\
\bottomrule
\end{tabular}
\end{table}

\subsection{Model Estimation}

Let $y_i^{(p)}$ denote the standardized ESG score of firm $i$ for pillar $p \in \{E, S, G\}$, and let $x_i = (x_{i1}, \ldots, x_{ik})$ represent the vector of standardized predictors. For each pillar, two models were estimated - the first one is the linear regression, and the second one is the neural network:

\begin{align}
\hat{y}_i^{(p)} &= \beta_0^{(p)} + x_i' \beta^{(p)} + \varepsilon_i^{(p)}, \\
\hat{y}_i^{(p)} &= f_{\theta}^{(p)}(x_i) = W_2^{(p)} \, g(W_1^{(p)} x_i + b_1^{(p)}) + b_2^{(p)} + \varepsilon_i^{(p)},
\end{align}

where $g(\cdot)$ is the Rectified Linear Unit (ReLU) activation function and $\theta = \{W_1, W_2, b_1, b_2\}$ are the trainable parameters of the neural network optimized via backpropagation. 
The network contains a single hidden layer with five neurons in the univariate specification and ten neurons in the multivariate one, trained for up to 1,000 iterations with a linear output activation, ensuring convergence while maintaining interpretability and computational stability.
The neural network architecture was intentionally kept simple to ensure comparability with the linear model and to avoid overfitting, given the high correlation between financial variables.
In the univariate specification, a single hidden layer with five neurons was used, corresponding approximately to the number of input predictors, while the multivariate specification employed ten neurons to account for the joint modelling of three ESG dimensions.

\subsection{Univariate Specification}

In the univariate case, each ESG pillar was modeled independently using both LM and NN models. 
At each cross-validation fold, the training set was used to fit the model, and predictions were generated on the hold-out set. 
The SAFE metrics were computed using the Cramér-Von Mises distance.

RGA measures the degree of rank concordance between actual and predicted values, while RGR assesses model stability by comparing predictions before and after a small random perturbation:
\begin{equation}\label{eq:perturbation}
\hat{y}_i' = \hat{y}_i + \varepsilon_i', \qquad \varepsilon_i' \sim \mathcal{N}(0, (0.5 \, \sigma_{\hat{y}})^2),
\end{equation}
where $\sigma_{\hat{y}}$ is the standard deviation of the predicted scores and $0.5$ is a scaling factor controlling the perturbation magnitude. 
RGE quantifies the relative contribution of each predictor by comparing full-model predictions with those obtained from a reduced model excluding a single variable.

Fold-level SAFE values were collected for each model and pillar, and final estimates were obtained by averaging across folds.

\subsection{Multivariate Specification}

In the multivariate case, the three ESG pillars were modeled jointly to account for potential interdependence among them. 
Let $Y = [E.Sc, S.Sc, G.Sc]$ denote the matrix of standardized ESG scores. 
Following Auricchio et al. (2025),\cite{Auricchio2025WhiteningGini} a whitening transformation was applied to remove cross-correlation among pillars:
\begin{equation}
C = \mathrm{corr}(Y), \quad C = O D O', \quad W = O D^{-1/2} O', \quad Y^W = Y W,
\end{equation}
where $O$ and $D$ are the eigenvectors and eigenvalues of $C$, respectively. 
The resulting components $Y^W = (W_1, W_2, W_3)$ are uncorrelated with unit variance, allowing consistent multivariate estimation of SAFE metrics.

Each whitened dimension $W_j$ was assigned a weight proportional to its average absolute mean:
\begin{equation}
\lambda_j = \frac{|\bar{W}_j|}{\sum_{j=1}^{3} |\bar{W}_j|}, \qquad \sum_{j=1}^{3} \lambda_j = 1,
\end{equation}
yielding $\lambda_E = 0.4129$, $\lambda_S = 0.2721$, and $\lambda_G = 0.3150$. These weights are also displayed in Figure~\ref{fig:weights}.

The multivariate SAFE metrics were then computed as weighted averages across whitened dimensions.

\begin{align}
\text{RGA}^W &= \sum_{j=1}^3 \lambda_j \, \text{RGA}(W_j, \hat{W}_j), \\
\text{RGR}^W &= \sum_{j=1}^3 \lambda_j \, \text{RGR}(\hat{W}_j, \hat{W}'_j), \\
\text{RGE}^W_k &= \sum_{j=1}^3 \lambda_j \, \left(1 - \text{RGE}^{*}(\hat{W}_j, \hat{W}_{j,(-k)})\right).
\end{align}
where $W_j$ represents the true (observed) whitened dimension $j$ for the test set; $\hat{W}_j$ represents the predicted whitened dimension $j$ obtained from the full model; $\hat{W}'_j$ represents the perturbed prediction for whitened dimension $j$, obtained by adding noise to $\hat{W}_j$ according to equation \ref{eq:perturbation}; $\hat{W}_{j,(-k)}$ represents the predicted whitened dimension $j$ obtained from a reduced model that excludes predictor $k$.

At each cross-validation fold, both models were fitted on the training data and evaluated on the held-out subset. 
The RGA, RGR, and RGE metrics were computed fold-wise and then aggregated to obtain final mean and standard deviation values for each model and variable. 
This setup enables a direct comparison of the accuracy, robustness, and explainability of linear and nonlinear approaches, both at the individual pillar level and in the joint ESG framework.

\begin{figure}
    \centering
    \includegraphics[width=0.7\linewidth]{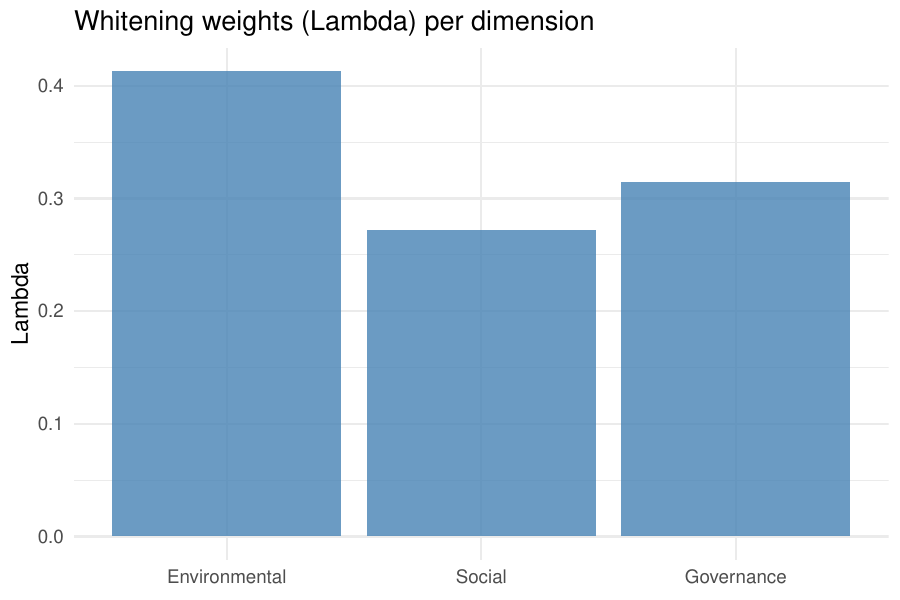}
    \caption{Weights of single pillars obtained through the whitening process. The Environmental score yields the highest weight, followed by the Governance pillar and the Social pillar.}
    \label{fig:weights}
\end{figure}

\subsection{Results and Comments}
\begin{table}[htbp]
\centering
\caption{Mean SAFE metrics for ESG pillars computed with 5-fold cross-validation across the two models (Linear Regression and Neural Network). Standard deviations are in parentheses.}
\resizebox{\textwidth}{!}{
\begin{tabular}{lcccccc}
\toprule
\multirow{2}{*}{\textbf{Metric}} & 
\multicolumn{2}{c}{\textbf{E.Sc}} & 
\multicolumn{2}{c}{\textbf{S.Sc}} & 
\multicolumn{2}{c}{\textbf{G.Sc}} \\
\cmidrule(lr){2-3} \cmidrule(lr){4-5} \cmidrule(lr){6-7}
 & \textbf{LM} & \textbf{NN} & \textbf{LM} & \textbf{NN} & \textbf{LM} & \textbf{NN} \\
\midrule
RGA & 0.641 (0.015) & 0.663 (0.046) & 0.528 (0.023) & 0.542 (0.051) & 0.657 (0.017) & 0.721 (0.030) \\
RGR & 0.973 (0.007) & 0.918 (0.050) & 0.908 (0.060) & 0.914 (0.023) & 0.912 (0.032) & 0.920 (0.047) \\
RGE\_Sector & 0.428 (0.078) & 0.375 (0.096) & 0.232 (0.060) & 0.337 (0.097) & 0.326 (0.098) & 0.265 (0.307) \\
RGE\_sTASS & 0.004 (0.004) & 0.190 (0.134) & 0.002 (0.002) & 0.343 (0.136) & 0.001 (0.001) & 0.325 (0.303) \\
RGE\_sSFND & 0.001 (0.000) & 0.120 (0.037) & 0.014 (0.009) & 0.270 (0.037) & 0.010 (0.005) & 0.305 (0.334) \\
RGE\_sTOVR & 0.004 (0.004) & 0.169 (0.036) & 0.014 (0.013) & 0.263 (0.072) & 0.005 (0.003) & 0.281 (0.266) \\
RGE\_sNINC & 0.002 (0.003) & 0.209 (0.115) & 0.011 (0.021) & 0.264 (0.082) & 0.005 (0.007) & 0.142 (0.099) \\
RGE\_sEBTD & 0.001 (0.001) & 0.287 (0.229) & 0.001 (0.002) & 0.360 (0.121) & 0.001 (0.001) & 0.197 (0.112) \\
\bottomrule
\end{tabular}}
\label{tab:univariate-safe}
\end{table}

The results of the univariate models, reported in Table \ref{tab:univariate-safe}, reveal several consistent patterns across the ESG pillars and model classes.
In terms of accuracy (RGA), neural networks generally outperform linear regressions, confirming their superior ability to capture nonlinear relationships between ESG scores and financial or sectoral characteristics. The lowest RGA values are observed for the Social pillar, suggesting that this dimension may be harder to explain through financial structure and sectoral composition alone. Among all specifications, the Governance pillar predicted through the neural network model achieves the highest accuracy (RGA = 0.721), whereas the Environmental pillar modelled through linear regression shows the most stable behaviuor, achieving the lowest standard deviation.

Regarding robustness (RGR), all models show a good performance, with scores around 0.9, indicating stable ranking predictions even under moderate perturbations of the predicted values. The linear model for the Environmental pillar shows the highest robustness (RGR = 0.973) and the smallest variability across folds, confirming its structural stability.

The interpretation of explainability (RGE) is more nuanced. The sectoral factor exhibits relatively high contributions across all models, confirming the relevance of industry effects in determining ESG performance. Conversely, financial variables show markedly lower RGE values in linear models but higher, yet more dispersed values in neural networks.

\begin{table}[htbp]
\centering
\caption{Multivariate SAFE metrics with whitening weights (5-fold cross-validation). Standard deviations in parentheses.}
\begin{tabular}{lcc}
\toprule
\textbf{Metric}  & \textbf{Linear Regression} & \textbf{Neural Network} \\
\midrule
RGA & 0.603 (0.010) & 0.641 (0.016) \\
RGR & 0.786 (0.027) & 0.840 (0.085) \\
RGE\_Sector & 0.333 (0.046) & 0.380 (0.115) \\
RGE\_sEBTD & 0.0009 (0.0008) & 0.291 (0.154) \\
RGE\_sNINC & 0.0056 (0.0069) & 0.257 (0.144) \\
RGE\_sSFND & 0.0075 (0.0054) & 0.292 (0.109) \\
RGE\_sTASS & 0.0030 (0.0015) & 0.287 (0.129) \\
RGE\_sTOVR & 0.0081 (0.0055) & 0.236 (0.047) \\
\bottomrule
\end{tabular}
\label{tab:multivariate-safe}
\end{table}

Multivariate results (Table \ref{tab:multivariate-safe}) reinforce the same general patterns we discussed for the univariate results. However, these models account for cross-pillar dependencies through the whitening transformation.
With respect to accuracy and robustness, the neural network again achieves higher mean RGA (0.641 vs. 0.603) and RGR (0.840 vs. 0.786) but also shows greater dispersion, consistent with its higher flexibility and sensitivity to data variation. This confirms the classical bias–variance trade-off observed in nonlinear models.

The explainability results reveal important structural differences. For the linear model, RGE values remain close to zero for most financial predictors, except the sector factor, indicating that the linear specification attributes limited marginal explanatory power to individual variables once joint correlations are accounted for. By contrast, the neural network assigns substantially higher RGE values across all predictors, probabily due to the fact that this model is able to capture explanatory importance more evenly across the feature space.
This pattern indicates that nonlinear interactions, absent in the linear case, play a key role in explaining ESG variability when the three pillars are modeled jointly.

\subsection{Shapley Value analysis}
To complement the RGE results and provide a model-agnostic measure of feature relevance, we compute Shapley values\cite{StrumbeljKononenko2014} for each feature using the Monte Carlo approximation method implemented in the \texttt{fastshap} package\cite{Greenwell2020}. This approach allows decomposing each model prediction into additive feature contributions, while at the same time ensuring fairness and consistency across different model structures.

Given a prediction model $f: \mathbb{R}^d \to \mathbb{R}$ trained on features $\mathcal{F} = \{1, \ldots, d\}$, the Shapley value $\phi_j$ for feature $j$ represents its average marginal contribution to the model output across all possible feature subsets:
\begin{equation}
\phi_j = \sum_{S \subseteq \mathcal{F} \setminus \{j\}} \frac{|S|!\,(d - |S| - 1)!}{d!} \left[ f(S \cup \{j\}) - f(S) \right].
\end{equation}

In practice, the exact enumeration of all $2^d$ coalitions is computationally costly. We therefore employ a Monte Carlo approximation, where for each observation $i$ and feature $j$, the marginal contribution is estimated over $M$ random feature orderings (here $M = 50$):
\begin{equation}
\hat{\phi}_j^{(i)} = \frac{1}{M} \sum_{m=1}^{M} \Big( f(x_{S_m \cup \{j\}}^{(i)}) - f(x_{S_m}^{(i)}) \Big).
\end{equation}

The global importance of feature $j$ is then obtained by averaging the absolute values of its local contributions across all test instances:
\begin{equation}
\bar{\phi}_j = \frac{1}{n} \sum_{i=1}^{n} |\hat{\phi}_j^{(i)}|.
\end{equation}

To facilitate interpretability and comparison, these mean absolute contributions are normalized to obtain percentage importances that sum to 100\%:
\begin{equation}
\text{Importance}_j = 
\frac{\bar{\phi}_j}{\sum_{k=1}^{d} \bar{\phi}_k} \times 100.
\end{equation}
These normalized quantities express the relative explanatory contribution of each predictor to the overall model output variance.

In the univariate specification, the procedure is applied to both the linear regression and neural network models, separately for each ESG pillar.
The analysis uses $K = 5$-fold cross-validation to ensure robustness of the estimated feature importances. For each fold:
\begin{enumerate}
    \item The training data are used to fit both models on the standardized predictors;
    \item Shapley values are computed on the held-out test data using $M=50$ random permutations per observation;
    \item Fold-specific results are aggregated by averaging the mean and standard deviation of Shapley values across folds.
\end{enumerate}

In the multivariate setting, separate Shapley analyses are conducted for each whitened dimension, obtaining dimension-specific importances \(\text{Importance}_j^{(k)}\).  
These are then aggregated using the absolute mean-based lambda weights obtained with the whitening transformation:
\begin{equation}
\text{Importance}_j^{(\text{multi})} = \sum_{k=1}^{3} \lambda_k \, \text{Importance}_j^{(k)}.
\end{equation}

As a result of this weighting, ESG dimensions with higher magnitude (i.e., larger contribution to total variance) show greater influence in the combined feature importance.

Results of this analysis are presented in Table \ref{tab:shapley} and illustrated in Figure \ref{fig:univariate} and Figure \ref{fig:multivariate}.
We interpret these results from three complementary perspectives:
(i) how each model behaves within each pillar (univariate setting);
(ii) how the pillars behave within each model;
(iii) how each model performs in the multivariate specification.

\begin{figure}[htbp]
  \centering
  \begin{subfigure}{0.69\textwidth}
    \includegraphics[trim={0cm 0cm 0cm 1.2cm},clip,width=\linewidth]{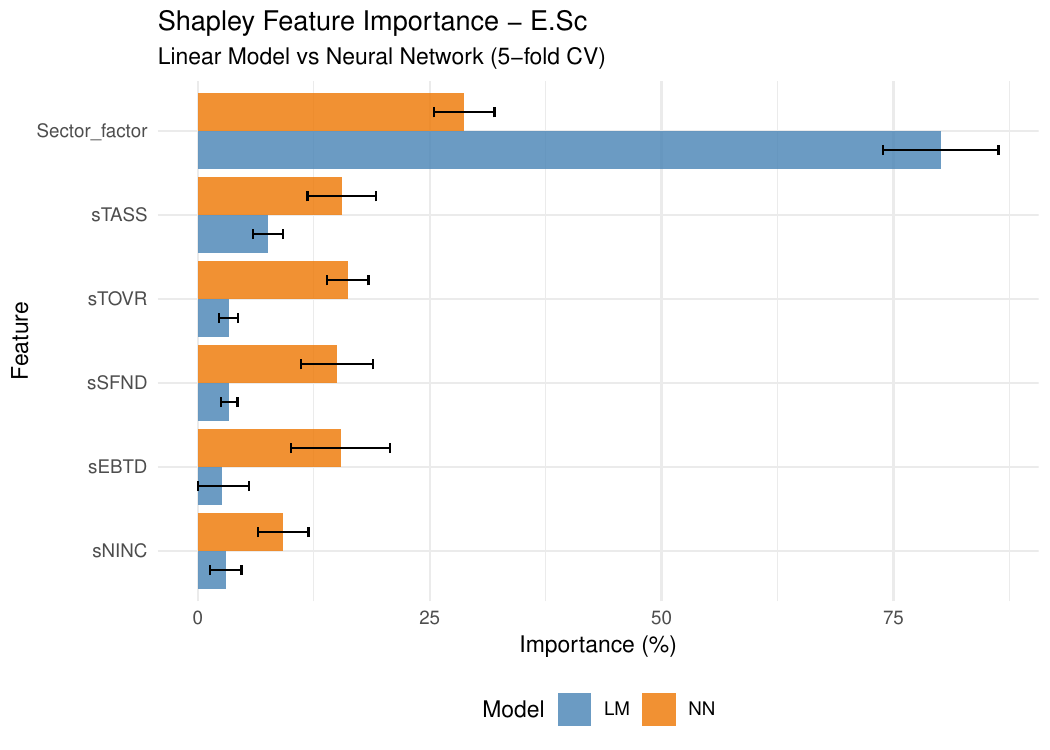}
    \caption{Environmental Pillar}\label{fig:env}
  \end{subfigure}
  \begin{subfigure}{0.69\textwidth}
    \includegraphics[trim={0cm 0cm 0cm 1.2cm},clip,width=\linewidth]{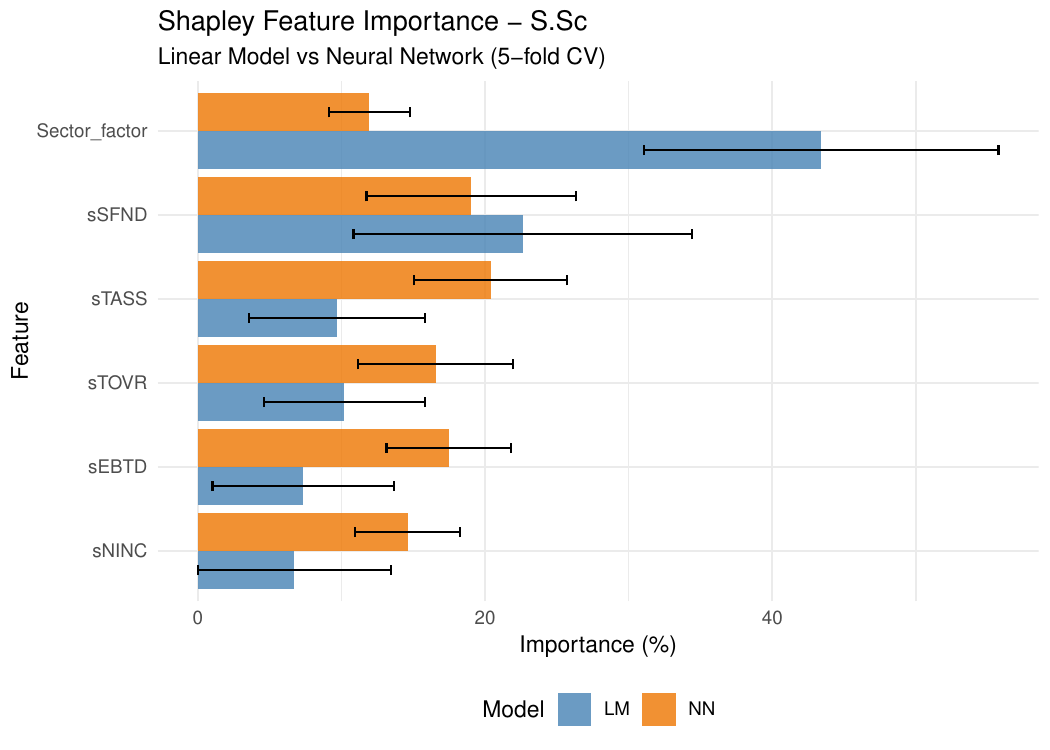}
    \caption{Social Pillar}\label{fig:soc}
  \end{subfigure}
  \begin{subfigure}{0.69\textwidth}
    \includegraphics[trim={0cm 0cm 0cm 1.2cm},clip,width=\linewidth]{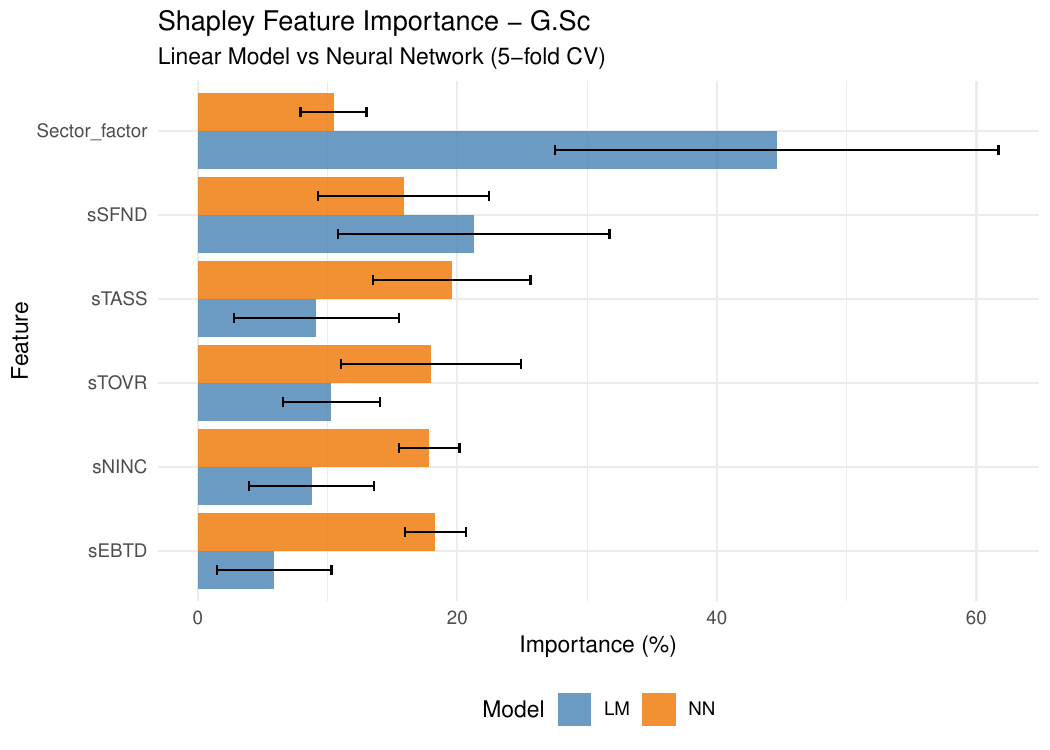}
    \caption{Governance Pillar}\label{fig:gov}
  \end{subfigure}
  \caption{Shapley-based feature importance for each pillar, comparing Linear Model ($LM$) and Neural Network ($NN$) under 5-fold cross-validation. Bars represent mean feature contributions to model predictions (in \%), with error bars indicating standard deviations across folds.}
  \label{fig:univariate}
\end{figure}

For the Environmental pillar (Figure \ref{fig:env}), the linear model (LM) assigns substantially higher importance to the sector factor compared to the neural network (NN). However, in both models, the sector factor remains the most influential variable. For LM, it also shows the largest standard deviation, suggesting that sector-related effects vary considerably across firms.
Among the remaining predictors, NN generally assigns higher importance scores than LM, which is consistent with the patterns previously observed in the RGE results. This likely reflects NN’s ability to distribute explainability more evenly across variables, while LM concentrates it on the sector component. In the NN, TASS and TOVR emerge as the most relevant financial variables after the sector factor.

For the Social pillar (Figure \ref{fig:soc}), the pattern is broadly similar. The LM again overemphasizes the sector factor, but here the SFND variable also gains relevance. Both features show higher variability, indicating less stable importance weights in LM.
In contrast, NN assigns relatively higher and more balanced importance to TASS and SFND, while the sector factor plays a more limited role. This might suggest that NN captures firm-level drivers of social performance.

Turning to the Governance pillar (Figure \ref{fig:gov}), LM continues to attribute dominant weight to the sector variable, with the highest standard deviation among all variables, indicating unstable contribution estimates. NN, instead, distributes relevance among multiple financial indicators.
Notably, TASS and EBTD receive greater importance under NN, implying that the nonlinear model may be better able to capture the role of profitability and firm structure in governance-related outcomes.

Considering all pillars within the LM, the sector variable consistently dominates, followed by SFND for the S.Sc and G.Sc pillars, and TASS for E.Sc. Standard deviations are systematically larger for sector-related contributions, reflecting their heterogeneous impact across firms.

For all pillars within the NN, the sector factor is most relevant only in the Environmental dimension. In the Social and Governance pillars, TASS and SFND emerge as leading predictors (with EBTD also important for Governance). This pattern suggests that the NN captures more nuanced, possibly nonlinear relationships, assigning weights that better reflect firm-specific and financial drivers of ESG performance.

Finally, in the multivariate specification (Figure \ref{fig:multivariate}), LM continues to emphasize the sector factor far more than NN, which again distributes explanatory power more evenly across predictors—mirroring the interpretability patterns seen in RGE. The most important variables for NN are TASS and SFND, while for LM the sector factor remains dominant, followed by TASS.
Overall, the results confirm that LM tends to overconcentrate importance on sectoral effects, whereas NN attributes more balanced and economically plausible importance across financial dimensions.

\begin{figure}
    \centering
    \includegraphics[trim={0cm 0cm 0cm 1.2cm},clip,width=0.8\linewidth]{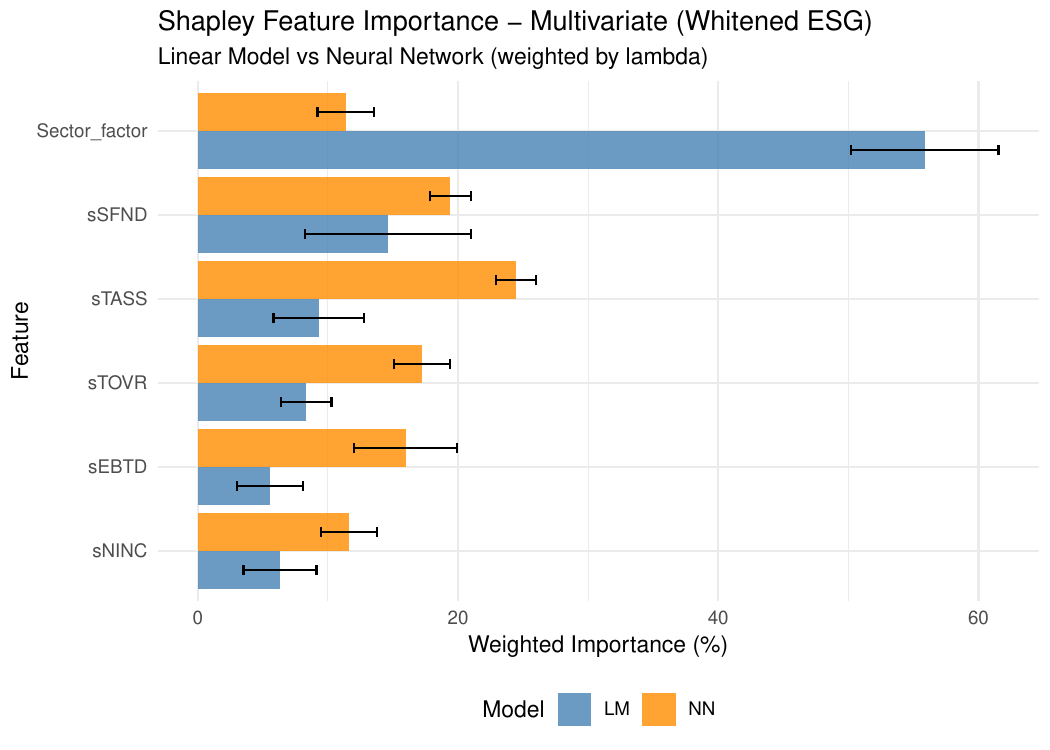}
    \caption{Shapley-based feature importance for multivariate models with whitened ESG components, comparing the Linear Model ($LM$) and Neural Network ($NN$). Feature importances (in \%) are averaged across folds and weighted by $\lambda$, with error bars representing standard deviations across folds.}
    \label{fig:multivariate}
\end{figure}

\begin{table}[htbp]
\centering
\caption{Mean and standard deviation of Shapley values and relative importances (Imp \%) across pillars and models.}
\label{tab:shapley}
\scriptsize
\begin{tabular}{llcccc}
\toprule
\multirow{2}{*}{\textbf{Pillar}} & 
\multirow{2}{*}{\textbf{Feature}} & 
\multicolumn{2}{c}{\textbf{Linear Regression}} &
\multicolumn{2}{c}{\textbf{Neural Network}} \\
\cmidrule(lr){3-4} \cmidrule(lr){5-6}
 &  & Shapley $\pm$ SD & Imp \% $\pm$ SD & Shapley $\pm$ SD & Imp \% $\pm$ SD \\
\midrule
\multicolumn{6}{c}{\textbf{Univariate Models}} \\
\midrule
\multirow{6}{*}{E.Sc} 
 & Sector & 0.0382 $\pm$ 0.0045 & 80.1 $\pm$ 6.2 & 0.0440 $\pm$ 0.0065 & 28.7 $\pm$ 3.3 \\
 & sTASS          & 0.0037 $\pm$ 0.0011 & 7.54 $\pm$ 1.63 & 0.0254 $\pm$ 0.0130 & 15.5 $\pm$ 3.7 \\
 & sSFND          & 0.0016 $\pm$ 0.0005 & 3.40 $\pm$ 0.88 & 0.0228 $\pm$ 0.0055 & 15.0 $\pm$ 3.9 \\
 & sTOVR          & 0.0016 $\pm$ 0.0005 & 3.33 $\pm$ 1.04 & 0.0258 $\pm$ 0.0101 & 16.2 $\pm$ 2.2 \\
 & sNINC          & 0.0015 $\pm$ 0.0009 & 3.00 $\pm$ 1.72 & 0.0146 $\pm$ 0.0056 & 9.23 $\pm$ 2.7 \\
 & sEBTD          & 0.0013 $\pm$ 0.0014 & 2.64 $\pm$ 2.84 & 0.0238 $\pm$ 0.0085 & 15.4 $\pm$ 5.3 \\
\midrule
\multirow{6}{*}{S.Sc} 
 & Sector & 0.0183 $\pm$ 0.0025 & 43.4 $\pm$ 12.3 & 0.0267 $\pm$ 0.0057 & 11.9 $\pm$ 2.8 \\
 & sSFND          & 0.0113 $\pm$ 0.0079 & 22.6 $\pm$ 11.8 & 0.0472 $\pm$ 0.0281 & 19.0 $\pm$ 7.3 \\
 & sTOVR          & 0.0045 $\pm$ 0.0028 & 10.2 $\pm$ 5.6 & 0.0377 $\pm$ 0.0153 & 16.6 $\pm$ 5.4 \\
 & sTASS          & 0.0048 $\pm$ 0.0037 & 9.68 $\pm$ 6.1 & 0.0482 $\pm$ 0.0215 & 20.4 $\pm$ 5.3 \\
 & sEBTD          & 0.0038 $\pm$ 0.0040 & 7.35 $\pm$ 6.3 & 0.0411 $\pm$ 0.0175 & 17.5 $\pm$ 4.3 \\
 & sNINC          & 0.0023 $\pm$ 0.0018 & 6.71 $\pm$ 6.8 & 0.0358 $\pm$ 0.0178 & 14.6 $\pm$ 3.7 \\
\midrule
\multirow{6}{*}{G.Sc} 
 & Sector & 0.0186 $\pm$ 0.0023 & 44.6 $\pm$ 17.1 & 0.152  $\pm$ 0.227  & 10.5 $\pm$ 2.5 \\
 & sSFND          & 0.0109 $\pm$ 0.0085 & 21.3 $\pm$ 10.5 & 0.336  $\pm$ 0.574  & 15.9 $\pm$ 6.6 \\
 & sTOVR          & 0.0052 $\pm$ 0.0038 & 10.3 $\pm$ 3.8  & 0.302  $\pm$ 0.474  & 18.0 $\pm$ 7.0 \\
 & sTASS          & 0.0055 $\pm$ 0.0055 & 9.14 $\pm$ 6.3  & 0.326  $\pm$ 0.563  & 19.6 $\pm$ 6.1 \\
 & sNINC          & 0.0046 $\pm$ 0.0044 & 8.78 $\pm$ 4.8  & 0.263  $\pm$ 0.392  & 17.8 $\pm$ 2.3 \\
 & sEBTD          & 0.0034 $\pm$ 0.0038 & 5.90 $\pm$ 4.4  & 0.343  $\pm$ 0.594  & 18.3 $\pm$ 2.3 \\
\midrule
\multicolumn{6}{c}{\textbf{Multivariate Models}} \\
\midrule
\multirow{6}{*}{} 
 & Sector & 0.0250 $\pm$ 0.0026 & 55.9 $\pm$ 5.7 & 0.0717 $\pm$ 0.0230 & 11.4 $\pm$ 2.2 \\
 & sSFND          & 0.0072 $\pm$ 0.0046 & 14.6 $\pm$ 6.4 & 0.282  $\pm$ 0.228  & 19.4 $\pm$ 1.6 \\
 & sTASS          & 0.0047 $\pm$ 0.0024 & 9.29 $\pm$ 3.5 & 0.443  $\pm$ 0.324  & 24.4 $\pm$ 1.6 \\
 & sTOVR          & 0.0039 $\pm$ 0.0007 & 8.34 $\pm$ 1.9 & 0.237  $\pm$ 0.175  & 17.2 $\pm$ 2.1 \\
 & sNINC          & 0.0028 $\pm$ 0.0010 & 6.32 $\pm$ 2.8 & 0.106  $\pm$ 0.0672 & 11.6 $\pm$ 2.1 \\
 & sEBTD          & 0.0029 $\pm$ 0.0015 & 5.55 $\pm$ 2.5 & 0.166  $\pm$ 0.105  & 16.0 $\pm$ 3.9 \\
\bottomrule
\end{tabular}
\end{table}

\subsubsection{Spearman's Correlation analysis}
In order to check the coherence of explainability across models and ESG pillars, we compute pairwise rank correlations using the Spearman coefficient (denoted by $\rho$).\cite{Spearman1987}  
This analysis quantifies whether features considered most important by one model are similarly ranked by another, and whether the same patterns of relevance persist across pillars or between univariate and multivariate specifications.

For each combination of ESG pillar ($E.Sc$, $S.Sc$, $G.Sc$) and model type (LM or NN), features are ranked in descending order according to their mean Shapley importance percentage. The ranking assigns lower numerical values to more important features, with average ranking used for ties.



Given two ranking vectors $\mathrm{R}_A = (R_{A1}, \ldots, R_{Ad})$ and $\mathrm{R}_B = (R_{B1}, \ldots, R_{Bd})$ for $d$ common features, the Spearman correlation coefficient is defined as:

\begin{equation}
\rho = 1 - \frac{6 \sum_{j=1}^{d} (R_{Aj} - R_{Bj})^2}{d (d^2 - 1)}.
\end{equation}

A value of $\rho = 1$ indicates perfect concordance (identical rankings), $\rho = -1$ perfect inversion (one ranking is the exact reverse of the other), and $\rho = 0$ no association between rankings.

Results are presented in Figure~\ref{fig:corrplot}.
The interpretation can be structured, like in the previous section, along three complementary perspectives: (i) how each model behaves within each pillar in the univariate setting; (ii) how the pillars behave within each model; (iii) how each model performs in the multivariate specification.

\begin{figure}
    \centering
    \includegraphics[trim={0cm 0cm 0cm 1cm},clip, width=0.9\linewidth]{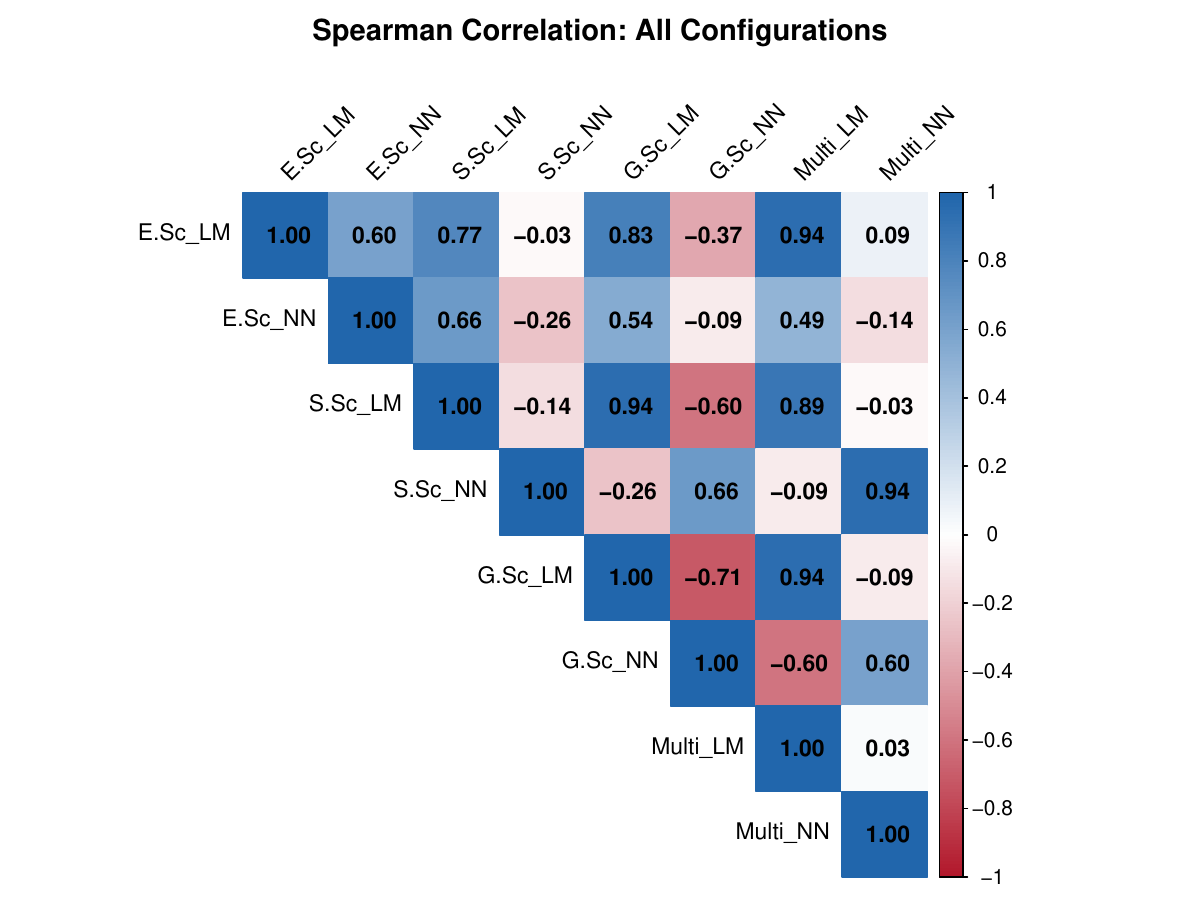}
    \caption{Spearman correlation matrix of feature importance rankings derived from Shapley values across univariate ($E.Sc$, $S.Sc$, $G.Sc$) and multivariate whitened models ($Multi$), for both Linear Model ($LM$) and Neural Network ($NN$) specifications.}
    \label{fig:corrplot}
\end{figure}

Rank correlations between the Linear Model (LM) and the Neural Network (NN) are generally low, confirming that the two models produce distinct patterns of variable importance. Specifically, correlations are moderate for the Environmental pillar ($\rho = 0.6$), small and negative for the Social pillar ($\rho = -0.143$), and negative and relatively strong for the Governance pillar ($\rho = -0.714$). This divergence is consistent with expectations: LM captures strictly linear relationships, while NN incorporates nonlinear interactions and redistributes importance weights across predictors accordingly. Because of this, explainability is different across models.

When keeping the model constant and comparing pillars, LM displays a high degree of internal consistency ($\rho \approx 0.8$ and above). The strongest association emerges between the Social and Governance pillars ($\rho = 0.943$), indicating that LM identifies relatively stable patterns of feature importance across ESG dimensions.
On the other hand, the NN model exhibits weak or even negative correlations across pillars, with only a modest positive association between the Social and Governance pillars ($\rho = 0.657$). This might indicate that the nonlinear model adapts its attribution patterns in a more flexible way across diverse contexts, possibly at the cost of stability and interpretability.

Finally, comparing univariate and multivariate rankings we can notice that LM maintains strong coherence across specifications ($\rho \approx 0.9$ for all pillars). This pattern suggests that variable importance in LM remains stable even when predictors interact jointly.
For NN, correlations with the multivariate configuration are generally weaker, except for the Social pillar, which displays a strong alignment with the multivariate model ($\rho = 0.943$). This again reflects NN’s flexibility in reallocating feature relevance across settings, consistent with its nonlinear structure.

\section*{Acknowledgment}

This work has been written within the activities of GNCS and GNFM groups of INdAM (Italian National Institute of High Mathematics). P.G.  has been funded by the European Union - NextGenerationEU, in the framework of the GRINS- Growing Resilient, INclusive and Sustainable (GRINS PE00000018). 
The work of A. E. Bernardelli has been conducted during and with the support of the Italian national inter-university PhD course in Sustainable Development and Climate change (link: www.phd-sdc.it)", Cycle XL, with the support of a scholarship co-financed under Ministerial Decree no. 630 of 28th March 2024 (Investment 3.3 – Innovative PhDs), based on the National Recovery and Resilience Plan (NRRP) - funded by the European Union - NextGenerationEU - Mission 4 "Education and Research", Component 2 "From Research to Business", and by Noto Sondaggi Srl.


\bibliographystyle{plain}
\bibliography{biblio}
\end{document}